\numberwithin{equation}{section}
\newtheorem{theorem}{Theorem}
\newtheorem{definition}{Definition}
\newtheorem{lemma}{Lemma}
\newtheorem{proposition}{Proposition}
\numberwithin{equation}{section}
\title[\emph{LIL for Navier-Stokes Equation}]
{The Law of the Iterated Logarithm for Two-Dimensional Stochastic Navier-Stokes Equations}
\date{}
\author[P. Fatheddin]{Parisa Fatheddin}
\address{Department of Mathematics, Ohio State University, Marion, USA.}
\email{fatheddin.1@osu.edu}
\subjclass[2010]{Primary: 76D05; Secondary: 60H15, 60F10, 35Q30}
\keywords{Law of iterated logarithm, moderate deviation principle, stochastic
partial differential equation, stochastic Navier-Stokes equations.}
\begin{document}
\maketitle
\begin{abstract}
We implement the Azencott method to prove the moderate deviation principle for the two-dimensional incompressible stochastic Navier-Stokes equations in a bounded domain. The Strassen's compact law of iterated logarithm is then achieved as an application.
\end{abstract}
\section{\textbf{I\lowercase{ntroduction}}}
\hspace{.5cm} The Classical Azencott method, first introduced in [3, 31], is applied here to prove the moderate deviation principle (MDP) for the two dimensional stochastic Navier-Stokes equations in incompressible flow given by,
\begin{eqnarray*}
&&\frac{\partial u^{\varepsilon}(t)}{\partial t} + \left(u^{\varepsilon}(t)\cdot \nabla\right) u^{\varepsilon}(t) + \nabla p(t) = f(t) + \Delta u^{\varepsilon}(t) + \sqrt{\varepsilon} \sigma(t,u^{\varepsilon}(t)) \frac{dW(t)}{dt}, \\
&& \left(\nabla \cdot u^{\varepsilon}\right)(t,y)=0, \hspace{.5cm} y\in D, t>0,\\
&& u^{\varepsilon}(t,y)= 0, \hspace{.5cm} y\in \partial D, t\geq 0,\\
&& u^{\varepsilon}(0,y) = u_{0}(y), \hspace{.5cm} y\in D,
\end{eqnarray*}
where $p(t,y)$ and $u^{\varepsilon}(t,y)\in \mathbb{R}^{2}$ are the pressure and velocity fields, respectively. A deterministic external force $f(t,y)\in \mathbb{R}^{2}$ is assumed to be given, along with a noise coefficient $\sigma(t,u^{\varepsilon}(t))$ of a Wiener process $W(\cdot)$, with properties provided later in Section 2. \\
\hspace{.5cm} The Theory of large deviations has proved to be useful in many fields such as in statistical mechanics, finance, queuing theory and communications. For examples of applications we refer the reader to [9, 21, 23, 25, 27, 30]. Another closely related area of study is moderate deviations, for which one proves large deviations for the centered process multiplied by a rate of convergence slower than the rate used for large deviations. \\
\hspace{.5cm} The majority of work on large deviations regarding the stochastic Navier-Stokes equations has been established based on the weak convergence approach introduced by [8, 10]. In [16] this technique was applied to obtain the large deviation principle (LDP) for a general class of stochastic PDEs of which two-dimensional stochastic incompressible Navier-Stokes equation (SNSE) was a special case. For two-dimensional viscous, incompressible SNSE, this approach was taken in [35] where the noise term tends to zero and in [5] where the viscosity is set to vanish. As for moderate deviations, considering the same general class of SPDEs introduced in [16] to achieve large deviations, authors in [40] proved the MDP for this class with multiplicative noise and in unbounded domains. For MDP in a bounded domain, the weak convergence method was applied for two-dimensional viscous, incompressible SNSE with multiplicative noise in [38] and with L$\acute{e}$vy noise in [19]. Our aim here is to revisit a classical method for large deviations, called the Azencott method in order to bring another perspective and emphasize the applications it offers. By proving the MDP by this method, we obtain the Friedlin-Wentzell inequality, which plays a major role in proving the Strassen's compact LIL. \\
\hspace{.5cm} There are different forms of LIL in the literature with names Classical Khintchine LIL, Strassen's compact LIL, Chover's type LIL, and Chung's type LIL. For a detailed introduction and history on each type we recommend [6] and for applications in fields such as finance, we refer the reader to [24, 41]. Here we consider the Strassen's compact LIL for our model. After the observation first made by [18] (see Lemma 1.4.3), many authors have proved the Strassen's compact LIL as a consequence of their large or moderate deviation results (see for instance [1, 4, 14, 20, 29]). Similarly, we use our result on MDP to achieve the Strassen's compact LIL.\\
\hspace{.5cm} We begin in Section Two with some background on the law of the iterated logarithm and provide statements of the main results along with notations needed for the rest of the paper. Section Three is devoted to proving the MDP by the Azencott method and afterwards the compact LIL is established in Section Four.
\section{\textbf{P\lowercase{reliminaries and} M\lowercase{ain} R\lowercase{esults}}}
\hspace{.5cm} In this section, we provide the notations and background needed for the paper. Let $D\subset \mathbb{R}^{2}$ be a bounded open domain with smooth boundary $\partial D$. For convenience, we will denote $\textbf{u}^{\varepsilon}(t)\in \mathbb{R}^{2}$ as $u^{\varepsilon}(t)$ where it is understood that our setting is in two dimensions. We next introduce the standard spaces and notations for the deterministic Navier-Stokes equation (NSE) (cf. [37]). Let
\begin{eqnarray*}
V &:=& \left\{u\in H_{0}^{1}(D;\mathbb{R}^{2}): \nabla \cdot u=0\right\}, \\
H &:=& \left\{u\in L^{2}(D;\mathbb{R}^{2}): \nabla \cdot u = 0\right\},
\end{eqnarray*}
with norms,
\begin{equation*}
\|u\|_{V} = \left(\int_{D}\mid \nabla u \mid^{2} dx\right)^{1/2}, \hspace{.4cm} \text{and} \hspace{.3cm} \|u\|_{H}= \left(\int_{D} \mid u\mid^{2}dx\right)^{1/2},
\end{equation*}
respectively. Note that spaces $H$ and $V$ are the closures of the divergence free smooth compactly supported functions in $\|\cdot \|_{L^{2}}$ and $\|\cdot\|_{H^{1}}$, respectively. Here we will follow the conventional notation in denoting the norm in $H$ as $\mid\cdot\mid$ and the norm in $V$ as $\|\cdot\|$. For space $H$, divergence, $\nabla \cdot u=0$, is understood in distributional sense and $u\cdot \hat{n}\mid_{\partial \Omega}=0$ is well-defined (see Theorems 1.4 and 1.6 of [37]). Both $H$ and $V$ are Hilbert spaces, and in particular $V$ may be equipped with the inner product,
\begin{equation*}
(u,v)_{V}= \sum_{i,j=1}^{2} \int_{D} \partial_{i}u_{j}\partial_{i}v_{j}dx.
\end{equation*}
\hspace{.5cm} Letting $H'$ and $V'$ denote the dual spaces of $H$ and $V$, respectively, we identify $H$ with $H'$ by the Riesz representation theorem to obtain, $V\hookrightarrow H \equiv H' \hookrightarrow V'$, where the embeddings are dense and compact. Furthermore, using the Helmholtz-Leray projection, $P_{H}:L^{2}(D;\mathbb{R}^{2})\rightarrow H$, we define,
\begin{eqnarray*}
Au &:=& -P_{H}\Delta u, \hspace{.4cm} \forall u\in H^{2}(D; \mathbb{R}^{2}) \cap V,\\
B(u,v) &:=& P_{H} \left(\left(u\cdot \nabla\right)v\right), \hspace{.4cm} \forall u,v\in D(B)\subset V\times V,
\end{eqnarray*}
where $A$ is a positive-definite, self-adjoint operator referred to as the Stokes operator. Operators $A$ and $B$ may be defined explicitly as follows,
\begin{eqnarray}
(Au,v) &:=& \sum_{i,j=1}^{2} \int_{D} \partial_{i}u_{j}\partial_{i}v_{j} dx,\\
\left(B(u,v),w\right) &:=& \sum_{i,j=1}^{2} \int_{D} u_{i}\partial_{i}v_{j}w_{j}dx =: b(u,v,w).
\end{eqnarray}
One may observe that $b(u,v,w)= - b(u,w,v)$ leading to $b(u,v,v)=0$. For estimates derived in the rest of the paper we have the following inequalities derived in [35, 38],
\begin{eqnarray}
\mid b(u,v,w)\mid &\leq& 2\|u\|^{1/2} \cdot  \mid u\mid^{1/2} \cdot \|v\|^{1/2} \cdot \mid v\mid^{1/2} \cdot \|w\|, \label{(2.3)}\\
\mid b(u,u,v)\mid &\leq& \frac{1}{2} \|u\|^{2} + c\|v\|_{L^{4}}^{4} \cdot \mid u\mid^{2}, \label{(2.4)}\\
\mid\left(B(u)-B(v),u-v\right)\mid &\leq& \frac{1}{2} \|u-v\|^{2} + c\mid u-v \mid^{2} \|v\|_{L^{4}}^{4}. \label{(2.5)}
\end{eqnarray}
\hspace{.5cm} Projecting system (1.1) onto the divergence free vector-fields by $P_{H}$, we obtain the preferred abstract version of SNSE as follows,
\begin{equation}\label{(2.6)}
du^{\varepsilon}(t) + Au^{\varepsilon}(t)dt + B(u^{\varepsilon}(t))dt = f(t)dt + \sqrt{\varepsilon}\sigma(t,u^{\varepsilon}(t))dW(t),
\end{equation}
in a probability space, $(\Omega, \mathcal{F}, P)$, where $W(\cdot)$ is an $H$ valued $\{\mathcal{F}_{t}\}_{t\geq 0}$-adapted Q-Wiener process, that may be written as,
\begin{equation*}
W(t):= \sum_{j=1}^{\infty} \sqrt{\lambda_{j}}e_{j}\beta_{j}(t),
\end{equation*}
 for an infinite sequence of independent, standard one dimensional $\{\mathcal{F}_{t}\}_{t\geq 0}$ Brownian motions and a complete orthonormal system $\{e_{j}\}_{j=1}^{\infty}$ in $H$ satisfying $Qe_{j}= \lambda_{j}e_{j}$, where $\lambda_{j}$ is the $j^{\text{th}}$ eigenvalue of the covariance operator $Q$. Furthermore, we define the Hilbert space, $H_{0}:= Q^{1/2}H$, with inner product,
\begin{equation*}
(u,v)_{0}= \left(Q^{-1/2}u, Q^{-1/2}v\right),
\end{equation*}
for all $u,v\in H_{0}$, where the embedding of $H_{0}$ in $H$ is Hilbert-Schmidt. We let $L_{Q}(H_{0}:H)$ be the space of linear operators $S$ such that $SQ^{1/2}$ is a Hilbert-Schmidt operator from $H$ to $H$, with norm, $\|S\|_{L_{Q}}:= \sqrt{tr(SQS^{*})}$. For more background on the Navier-Stokes equations in the deterministic setting we recommend [33, 34, 37]. We now state the assumptions required for our results. \\
\hspace{.5cm} Assumption (H$_{1}$): $f\in L^{4}(0,T;V')$ i.e. there exists a positive constant, $K_{0}$, such that
\begin{equation*}
\int_{0}^{T} \|f(s)\|_{V'}^{4}ds <K_{0},
\end{equation*}
and the function $\sigma:[0,T]\times V\rightarrow L_{Q}(H_{0};H)$ is bounded, satisfies the linear growth condition and is Lipschitz continuous. That is, for all $u,v\in V$ and all $t\in [0,T]$:
\begin{eqnarray}
&&\|\sigma(t,u)\|_{L_{Q}}\leq K_{1}, \hspace{.4cm} \|\sigma(t,u)\|_{L_{Q}}^{2}\leq K_{2}(1+\|u\|^{2}),\nonumber\\
&& \text{and} \hspace{.4cm} \|\sigma(t,u)-\sigma(t,v)\|_{L_{Q}}\leq K_{3}\|u-v\|. \label{(2.7)}
\end{eqnarray}
\hspace{.5cm} Assumption (H$_{2}$): suppose Assumption (H$_{1}$) holds and additionally suppose,
\begin{equation}\label{(2.8)}
\|\text{curl}\sigma(t,u)\|_{L_{Q}}^{2} \leq \widetilde{K}_{0} + \widetilde{K}_{1}\|u\|^{2},
\end{equation}
for $u\in D(A)$. For our results the following estimates achieved in [35, 38] are frequently used.
\begin{lemma}(Proposition 2.3 in [35] and Proposition 3.1 in [38]). \label{Lemm1}
If $u^{\varepsilon}(t)$ is the solution of SNSE \eqref{(2.6)}, then under Assumption (H$_{1}$), for any $\varepsilon < \frac{1}{2K_{0}^{2}} \wedge \frac{1}{4K_{0}} \wedge \frac{1}{2K_{2}}$,
\begin{eqnarray}
&&\mathbb{E}\left(\sup_{0\leq t\leq T} \mid u^{\varepsilon}(t)\mid^{2} + \int_{0}^{T}\|u^{\varepsilon}(s)\|^{2}ds\right)\leq K_{4}, \label{(2.9)}\\
&& \mathbb{E}\left(\sup_{0\leq t\leq T} \mid u^{\varepsilon}(t)\mid^{4} + \int_{0}^{T} \mid u^{\varepsilon}(s)\mid^{2}\|u^{\varepsilon}(s)\|^{2}ds\right)\leq K_{5},\label{(2.10)}\\
&&\sup_{0\leq t\leq T} \mid u^{0}(t)\mid^{2} + \int_{0}^{T} \|u^{0}(s)\|^{2}ds\leq K_{6}, \label{(2.11)}\\
&&\int_{0}^{T}\|u^{0}(s)\|^{4}_{L^{4}}ds\leq \sup_{0\leq t\leq T} \mid u^{0}(t)\mid^{2}\int_{0}^{T} \|u^{0}(s)\|^{2}ds \leq K_{7}, \label{(2.12)}\\
&&\mathbb{E} \left(\sup_{0\leq t\leq T} \mid u^{\varepsilon}(t)-u^{0}(t)\mid^{2} + \int_{0}^{T} \|u^{\varepsilon}(s)-u^{0}(s)\|^{2}ds\right) \leq \varepsilon K_{8}, \label{(2.13)}
\end{eqnarray}
where each constant above depends on $T$ and $K_{0}$.
\end{lemma}
We say that a family $\{u^{\varepsilon}(\cdot)\}_{\varepsilon>0}$ satisfies the moderate deviation principle, if the family $\{v^{\varepsilon}(\cdot)\}_{\varepsilon>0}$ defined as $v^{\varepsilon}(t):= (a(\varepsilon)/\sqrt{\varepsilon})(u^{\varepsilon}(t)-u^{0}(t))$ obeys the large deviation principle where conditions on $a(\varepsilon)$ are $a(\varepsilon)>0$ and $a(\varepsilon)/\sqrt{\varepsilon} \rightarrow \infty$ as $\varepsilon$ tends to zero. This ensures that the rate of decay of moderate deviation given by $a(\varepsilon)$ is at a slower speed than the rate of decay for large deviation given by $\sqrt{\varepsilon}$. Let $\mathcal{H}_{0}$ be the Cameron-Martin space consisting of absolutely continuous functions, $h:[0,T]\rightarrow H_{0}$ such that $\int_{0}^{T} \mid h(s)\mid_{0}^{2}ds<\infty$ and denote $S_{N}:= \{h\in \mathcal{H}_{0}: \int_{0}^{T} \mid h(s)\mid_{0}^{2}ds\leq N\}$. Then for every $h\in S_{N}$, the controlled PDEs for $v^{\varepsilon}(t)$, also referred to as the skeleton equation, is given by,
\begin{equation}\label{(2.14)}
dX^{h}(t) + AX^{h}(t) = -B(X^{h}(t), u^{0}(t))dt - B(u^{0}(t), X^{h}(t))dt + \sigma(t, u^{0}(t))h(t)dt,
\end{equation}
where $X^{h}(0)=0$ and for which there exists a unique solution, denoted as $\Gamma^{0}(\int_{0}^{\cdot}h(s)ds)$, in $\mathcal{C}([0,T];H)\cap L^{2}(0,T;V)$ (cf. [38]). Let,
\begin{equation}\label{(2.15)}
\varepsilon_{0}:= \min\left\{1, \frac{1}{2K_{0}^{2}}, \frac{1}{4K_{0}}, \frac{1}{2K_{2}}, \frac{1}{78K_{9}}\right\},
\end{equation}
where, $K_{9}$ is a positive constant, introduced later in the proof of the main results.
\begin{theorem}\label{Them1}
Family $\{u^{\varepsilon}(\cdot)\}_{\varepsilon\in (0,\varepsilon_{0})}$ satisfies the moderate deviation principle in $\mathcal{C}([0,T];H)\cap L^{2}(0,T;V)$ with speed $a(\varepsilon)^{2}$ and rate function,
\begin{align}\label{(2.16)}
I(v) = \left\{\begin{array}{ll}
\frac{1}{2}\inf \int_{0}^{T} \mid h(s)\mid_{0}^{2}ds, & \hspace{.2cm}\text{for} \hspace{.1cm} v= \Gamma^{0}(\int_{0}^{\cdot}h(s)ds), h\in \mathcal{H}_{0},\\
\infty &\hspace{.2cm}\text{otherwise}.
\end{array}\right.
\end{align}
\end{theorem}
For the above theorem, we first prove the result for $a(\varepsilon)= 1/\sqrt{2\log \log \frac{1}{\varepsilon}}$, which satisfies the required conditions on $a(\varepsilon)$ and gives the process,
\begin{equation}\label{(2.17)}
Z^{\varepsilon}(t):= \frac{1}{\sqrt{2\varepsilon \log \log \frac{1}{\varepsilon}}}\left(u^{\varepsilon}(t)-u^{0}(t)\right).
\end{equation}
Namely,
\begin{eqnarray}\label{(2.18)}
Z^{\varepsilon}(t) &=& -\int_{0}^{t}AZ^{\varepsilon}(s)ds - \int_{0}^{t} B\left(Z^{\varepsilon}(s), u^{\varepsilon}(s)\right)ds - \int_{0}^{t} B\left(u^{0}(s), Z^{\varepsilon}(s)\right)ds\nonumber\\
&& + \frac{1}{\sqrt{2 \log \log \frac{1}{\varepsilon}}}\int_{0}^{t} \widetilde{\sigma}(s, Z^{\varepsilon}(s))dW(s),
\end{eqnarray}
with
\begin{equation*}
\widetilde{\sigma}(t, Z^{\varepsilon}(t)):= \sigma\left(t, \sqrt{2\varepsilon \log \log \frac{1}{\varepsilon}}Z^{\varepsilon}(t) + u^{0}(t)\right).
\end{equation*}
Note that based on assumptions in \eqref{(2.7)} and \eqref{(2.8)}, the following conditions hold,
\begin{eqnarray}
&& \|\widetilde{\sigma}(t,Z^{\varepsilon}(t))\|_{L_{Q}}^{2} \leq K_{9}\left(1+ 4\varepsilon \log \log \frac{1}{\varepsilon}\|Z^{\varepsilon}(t)\|^{2} + 2\|u^{0}(t)\|^{2}\right),\label{(2.19)}\\
&& \|\widetilde{\sigma}(t,Z_{1}^{\varepsilon}(t))- \widetilde{\sigma}(t, Z_{2}^{\varepsilon}(t))\|_{L_{Q}} \leq K_{10} \sqrt{2\varepsilon  \log \log \frac{1}{\varepsilon}}\|Z_{1}^{\varepsilon}(t)-Z^{\varepsilon}_{2}(t)\|, \label{(2.20)}\\
&&\|\text{curl} \widetilde{\sigma}(t,Z^{\varepsilon}(t))\|_{L_{Q}}^{2}\leq \widetilde{K}_{2} + \widetilde{K}_{3}\sqrt{2\varepsilon \log \log \frac{1}{\varepsilon}}(\|Z^{\varepsilon}(t)\| + \|u^{0}(t)\|). \label{(2.21)}
\end{eqnarray}
The following is the definition of Strassen's Compact LIL and the result achieved in this paper. For more information and similar results on this type of LIL we recommend [4, 17, 22, 39].
\begin{definition}
A class of functions, $\mathcal{F}$ satisfies Strassen's Compact LIL with respect to an i.i.d. sequence of random variables, $\{X_{j}\}_{j\geq 1}$ if there exists a compact set $J$ in $\ell_{\infty}(\mathcal{F})$ such that $\{X_{j}\}_{j\geq 1}$ is a.s. relatively compact and its limit set is precisely $J$.
\end{definition}
\begin{theorem}\label{Them2}
Family $\{Z^{\varepsilon}(\cdot)\}_{\varepsilon\in (0,\varepsilon_{0})}$ is relatively compact in $\mathcal{C}([0,T];H)\cap L^{2}(0,T;V)$ and its set of limit points is exactly, \\
$L:=\{g\in \mathcal{C}([0,T];H)\cap L^{2}(0,T;V) : I(g)\leq \frac{N}{2}\}$, where $I(g)$ is given by \eqref{(2.16)}.
\end{theorem}
\section{Moderate Deviations}
\hspace{.5cm} We prove the moderate deviation principle for $\{u^{\varepsilon}(\cdot)\}_{\varepsilon \in (0,\varepsilon_{0})}$ by establishing the large deviation principle for $\{Z^{\varepsilon}(\cdot)\}_{\varepsilon\in (0,\varepsilon_{0})}$. The Azencott method implemented here may be described as follows. Consider two families of random variables $\{Y_{1}^{\varepsilon}\}_{\varepsilon>0}$, $\{Y_{2}^{\varepsilon}\}_{\varepsilon>0}$ taking values in Polish spaces $\mathcal{E}_{1}, \mathcal{E}_{2}$, respectively with the corresponding metrics $d_{1},d_{2}$. Suppose $\{Y_{1}^{\varepsilon}\}_{\varepsilon>0}$ satisfies the large deviation principle with rate function $\widetilde{I}(g)$ where $g\in \mathcal{E}_{1}$. Let $a>0$ be fixed and $\Phi: \{\widetilde{I}<\infty\} \rightarrow \mathcal{E}_{2}$. For any $R>0, \rho>0$, suppose there exists $\overline{\varepsilon}>0$ and $\eta>0$ such that for all $0<\varepsilon\leq \overline{\varepsilon}$, the following inequality,
\begin{equation}\label{(3.1)}
P\left(d_{2}(Y_{2}^{\varepsilon}, \Phi(f)) \geq \rho, d_{1}\left(Y_{1}^{\varepsilon},f\right)<\eta\right)\leq \exp\left(-\frac{R}{\varepsilon^2}\right),
\end{equation}
referred to as the Freidlin-Wentzell inequality, holds for any $g\in \mathcal{E}_{1}$ with $\widetilde{I}(g)\leq a$. In addition, suppose the map $\Phi(\cdot)$ is continuous with respect to the topology of $\mathcal{E}_{1}$ when restricted to compact sets $\{\widetilde{I}\leq a\}_{a>0}$ for every positive constant $a$. Then $\{Y_{2}^{\varepsilon}\}_{\varepsilon>0}$ also satisfies the large deviation principle with rate function, $I(h):= \inf\{\widetilde{I}(g): \Phi(g)=h\}$ for any $h\in \mathcal{E}_{2}$. In the setting of the stochastic PDEs, the map $\Phi(\cdot)$ is the unique solution of the skeleton equation, which in our case is given by \eqref{(2.14)}. We let $Y_{1}^{\varepsilon}:= (1/\sqrt{2\log \log \frac{1}{\varepsilon}})W$, which by an extension of the result of the Schilder's theorem to Q-Wiener process, is known to satisfy the large deviation principle with a good rate function (for a proof see Theorem 3.1 in Chapter 6 of [2]). Thus, it is sufficient to prove that $h\mapsto X^{h}$ is continuous and achieve inequality \eqref{(3.1)}. For examples of large deviations results using this technique for stochastic PDEs we refer the reader to [7, 13, 15, 28]. We begin by establishing the continuity of the map $h\mapsto X^{h}(t)$. For simplicity of notation, we denote $L^{\infty}([0,T];H_{0})$ as $L_{H_{0}}^{\infty}$ and for the rest of the article we let the norm in $\mathcal{C}([0,T];H)\cap L^{2}(0,T;V)$ be denoted as $\mathcal{E}(T)$. Namely,
\begin{equation*}
\|u(t)\|_{\mathcal{E}(T)} := \left(\sup_{0\leq t\leq T} \mid u(t)\mid^{2} + \int_{0}^{T} \|u(s)\|^{2}ds\right)^{1/2}.
\end{equation*}
\begin{lemma}\label{lemm2}
For every $h\in \mathcal{H}_{0}$ and $a>0$, the map $h\mapsto X^{h}$ is continuous in $\mathcal{C}([0,T];H)\cap L^{2}(0,T;V)$ with respect to the uniform convergence topology when restricted to the level set $\{I\leq a\}_{a>0}$.
\end{lemma}
\begin{proof}
Let $a>0$ and $h,k\in \mathcal{H}_{0}$ such that $\mid h\mid_{0}\vee \mid k\mid_{0} \leq a$, then we have using $b(u,v,v)=0$,
\begin{eqnarray*}
&&\mid X^{h}(t)-X^{k}(t)\mid^{2} + 2\int_{0}^{t} \|X^{h}(s)-X^{k}(s)\|^{2}ds\\
&&= -2\int_{0}^{t} \left(B(X^{h}(s)-X^{k}(s), u^{0}(s)), X^{h}(s)-X^{k}(s)\right)ds\\
&& + 2\int_{0}^{t} \left(\sigma(s,u^{0}(s))(h(s)-k(s)), X^{h}(s)-X^{k}(s)\right)ds.
\end{eqnarray*}
By inequality \eqref{(2.4)} and Young's inequality along with Assumption (H$_{1}$) we obtain,
\begin{eqnarray}\label{(3.2)}
&&\mid X^{h}(t)-X^{k}(t)\mid^{2} + 2\int_{0}^{t} \|X^{h}(s)-X^{k}(s)\|^{2}ds\nonumber \\
&& \leq \int_{0}^{t} \|X^{h}(s)-X^{k}(s)\|^{2}ds + 2 \int_{0}^{t} c \|u^{0}(s)\|_{L^{4}}^{4} \mid X^{h}(s)-X^{k}(s)\mid^{2}ds\nonumber\\
&& + 2\|h(s)-k(s)\|_{L^{\infty}_{H_{0}}}^{2} + \frac{1}{2} \int_{0}^{t} \|\sigma(s,u^{0}(s))\|_{L_{Q}}^{2} \mid X^{h}(s)-X^{k}(s)\mid^{2}ds\nonumber\\
&&\leq \int_{0}^{t} \left(2c\|u^{0}(s)\|_{L^{4}}^{4} + \frac{K_{2}}{2} + \frac{K_{2}}{2}\|u^{0}(s)\|^{2}\right)\mid X^{h}(s)-X^{k}(s)\mid^{2}ds\nonumber\\
&& + \int_{0}^{t} \|X^{h}(s)-X^{k}(s)\|^{2}ds + 2 \|h(s)-k(s)\|_{L_{H_{0}}^{\infty}}^{2},
\end{eqnarray}
which by Gronwall's inequality yields,
\begin{eqnarray}\label{(3.3)}
&&\mid X^{h}(t)-X^{k}(t)\mid^{2}\\
&&\leq 2 \|h(s)-k(s)\|_{L_{H_{0}}^{\infty}}^{2}\exp\left(\int_{0}^{T} 2c\|u^{0}(s)\|_{L^{4}}^{4} + \frac{K_{2}}{2} + \frac{K_{2}}{2}\|u^{0}(s)\|^{2}ds\right).\nonumber
\end{eqnarray}
Noting inequalities \eqref{(2.11)} and \eqref{(2.12)} and applying \eqref{(3.3)} on the right hand side of \eqref{(3.2)}, we achieve the continuity of the map $h\mapsto X^{h}$.
\end{proof}
\hspace{.5cm} Now we focus on obtaining the Freidlin-Wentzell inequality \eqref{(3.1)}, which for our model is, for any $R>0$ and $\rho>0$, there exists $\eta>0$ and $\overline{\varepsilon}>0$ such that for all $0<\varepsilon\leq \overline{\varepsilon}$,
\begin{eqnarray}\label{(3.4)}
&&P\left(\|Z^{\varepsilon}(t)-X^{h}(t)\|_{\mathcal{E}(T)} \geq \rho, \left\| \frac{1}{\sqrt{2\log \log \frac{1}{\varepsilon}}} W-h\right\|_{L_{{H}}^{\infty}} <\eta\right)\nonumber\\
&&\leq \exp\left(-2R\log \log \frac{1}{\varepsilon}\right),
\end{eqnarray}
where as noted earlier, $\{(1/\sqrt{2\log \log \frac{1}{\varepsilon}})W\}_{\varepsilon>0}$ satisfies the large deviation principle. By Girsanov's transformation theorem, inequality \eqref{(3.4)} is implied by the following inequality (see [13, 28, 32] for more details),
\begin{eqnarray}\label{(3.5)}
&&P\left(\|\widetilde{Z}^{\varepsilon}(t)-X^{h}(t)\|_{\mathcal{E}(T)}\geq \rho, \left\| \frac{1}{\sqrt{2\log \log \frac{1}{\varepsilon}}} W\right\|_{L_{{H}}^{\infty}} <\eta\right) \nonumber\\
&&\leq \exp\left(-2R\log \log \frac{1}{\varepsilon}\right),
\end{eqnarray}
where,
\begin{eqnarray*}
\widetilde{Z}^{\varepsilon}(t)&=& - \int_{0}^{t} A\widetilde{Z}^{\varepsilon}(s)ds - \int_{0}^{t}B(u^{\varepsilon}(s), \widetilde{Z}^{\varepsilon}(s))ds - \int_{0}^{t} B(\widetilde{Z}^{\varepsilon}(s), u^{0}(s))ds\\
&& + \frac{1}{\sqrt{2\log \log \frac{1}{\varepsilon}}} \int_{0}^{t} \widetilde{\sigma}(s, \widetilde{Z}^{\varepsilon}(s))dW(s) + \int_{0}^{t} \widetilde{\sigma}(s, \widetilde{Z}^{\varepsilon}(s))h(s)ds.
\end{eqnarray*}
We now apply a time discretization on $\widetilde{Z}^{\varepsilon}(t)$ by letting $\Delta_{j}^{n}:= [t_{j}^{n}, t_{j+1}^{n})$, where, $n\in \mathbb{N}\backslash \{0\}, j= 0,1,2,..,2^n -1$ and $t_{j}^{n}= (Tj)/2^{n}$. Then to achieve \eqref{(3.5)}, it is sufficient to prove that there exists $\beta>0, n_{0}\in \mathbb{N}\backslash \{0\}$ and $\overline{\varepsilon}>0$ such that for all $n\geq n_{0}, \varepsilon \in (0,\overline{\varepsilon})$,
\begin{equation}
P\left(\|\widetilde{Z}^{\varepsilon}(t)- \widetilde{Z}^{\varepsilon}(t_{i}^{n})\|_{\mathcal{E}(T)} >\beta\right)\leq \exp\left(-2R\log \log \frac{1}{\varepsilon}\right),\label{(3.6)}
\end{equation}
and
\begin{flalign}\label{(3.7)}
&P\left(\|\widetilde{Z}^{\varepsilon}(t)-X^{h}(t)\|_{\mathcal{E}(T)}\geq \rho, \left\|\frac{1}{\sqrt{2\log \log \frac{1}{\varepsilon}}} W\right\|_{L_{H}^{\infty}}<\eta, \right. \nonumber&\\
&\hspace{2.5cm}\left. \|\widetilde{Z}^{\varepsilon}(t)-\widetilde{Z}^{\varepsilon}(t_{i}^{n})\|_{\mathcal{E}(T)}\leq \beta\right)\leq \exp\left(-2R\log \log \frac{1}{\varepsilon}\right).&
\end{flalign}
For this purpose the following lemmas are proved and applied. For better presentation, their proofs are given in the Appendix. Recall $\varepsilon_{0}$ defined in \eqref{(2.15)}. Let $p\geq 1$ and
\begin{equation*}
\varepsilon_{1}:= \min\left\{1, \frac{1}{2K_{0}^{2}}, \frac{1}{4K_{0}}, \frac{1}{2K_{2}}, \frac{1}{36K_{9}}\right\}, \hspace{.4cm}
\varepsilon_{2}:= \min\left\{\varepsilon_{1}, \frac{1}{78K_{9}}, \frac{1}{K_{9}(36p+2)}\right\}.
\end{equation*}
\begin{lemma}\label{Lemm3}
If $\varepsilon\in (0,\varepsilon_{1})$,
\begin{equation}\label{(3.8)}
\mathbb{E}\sup_{0\leq s\leq t} \mid \widetilde{Z}^{\varepsilon}(s)\mid^{2} + \int_{0}^{t} \mathbb{E}\|\widetilde{Z}^{\varepsilon}(s)\|^{2}ds \leq \widetilde{M}_{1}(T,\varepsilon),
\end{equation}
where $\widetilde{M}_{1}(T,\varepsilon)$ is a positive constant.
\end{lemma}
\begin{lemma}\label{Lemm4}
For $0<\varepsilon<\varepsilon_{1}\wedge\frac{1}{78K_{9}}$,
\begin{equation}\label{(3.9)}
\mathbb{E} \sup_{0\leq s\leq T} \mid \widetilde{Z}^{\varepsilon}(s)\mid^{4} + \mathbb{E}\int_{0}^{T} \mid\widetilde{Z}^{\varepsilon}(s)\mid^{2}\|\widetilde{Z}^{\varepsilon}(s)\|^{2}ds \leq \widetilde{M}_{2}(T,\varepsilon),
\end{equation}
and for $\varepsilon\in (0,\varepsilon_{2})$ and any $p\geq 1$,
\begin{equation}\label{(3.10)}
\mathbb{E}\sup_{0\leq s\leq t} \mid \widetilde{Z}^{\varepsilon}(s)\mid^{2p} + \mathbb{E}\int_{0}^{T} \mid\widetilde{Z}^{\varepsilon}(s)\mid^{2(p-1)} \|\widetilde{Z}^{\varepsilon}(s)\|^{2}ds \leq \widetilde{M}_{p}(T,\varepsilon),
\end{equation}
where $\widetilde{M}_{2}(T,\varepsilon)$ and $\widetilde{M}_{p}(T,\varepsilon)$ are positive constants.
\end{lemma}
We note that in the proof of the main results, \eqref{(3.10)} is used only for $p=1$ and $p=2$. Thus, we choose $\varepsilon_{0}$ given in \eqref{(2.15)} to apply the above estimates and those offered by Lemma \ref{Lemm1}. With the same set of techniques used to prove the above estimates, we may derive the following bounds. For the proofs of inequalities \eqref{(3.11)} and \eqref{(3.12)} we refer the reader to Lemma 4.2 in [26] and Proposition 4.4 in [38], respectively.
\begin{lemma}\label{Lemm5}
For any $p\geq1$ if $\varepsilon < \frac{2}{1+2p}$,
\begin{equation}\label{(3.11)}
\mathbb{E}\sup_{0\leq s\leq T} \mid u^{\varepsilon}(s)\mid^{2p} + \mathbb{E} \int_{0}^{T} \mid u^{\varepsilon}(s)\mid^{2(p-1)}\|u^{\varepsilon}(s)\|^{2}ds\leq \widetilde{N}_{p}(\varepsilon, T, u^{\varepsilon}(0)),
\end{equation}
and for any $N>0$ and $h\in \mathcal{H}_{0}$,
\begin{equation}\label{(3.12)}
\sup_{h\in S_{N}} \left(\sup_{0\leq t\leq T} \mid X^{h}(t)\mid^{2} + \int_{0}^{T}\|X^{h}(s)\|^{2}ds\right)\leq \widetilde{N},
\end{equation}
where $\widetilde{N}$ depends on $T$ and $N$.
\end{lemma}
In addition, we use the proposition below established in [5] for 2D stochastic Navier Stokes equation having viscosity, $\nu>0$, given as,
\begin{eqnarray*}
du_{h}^{\nu}(t)&=& - \left(\nu Au_{h}^{\nu}(t) + B\left(u_{h}^{\nu}(t), u_{h}^{\nu}(t)\right)\right)dt \\ &&+\widetilde{\sigma}_{\nu}(t,u_{h}^{\nu}(t))h(t)dt + \sqrt{\nu} \sigma_{\nu}(t,u_{h}^{\nu}(t))dW(t),\\
u_{h}^{\nu}(0)&=& \eta.
\end{eqnarray*}
\begin{proposition}(Proposition 2.2 in [5]). Let $p\geq 2,$ $\mathbb{E}\|\eta\|^{2p}<\infty$ and suppose Assumption (H$_{2}$) holds. Then given $M>0$, there exists a positive constant $C_{2}(p,M)$ such that for $\nu\in (0,\nu_{0}]$ and $h\in S_{M}$,
\begin{equation}\label{(3.13)}
\mathbb{E}\left(\sup_{0\leq t\leq T} \|u_{h}^{\nu}(t)\|^{2p} + \nu \int_{0}^{T} \mid Au_{h}^{\nu}(s)\mid^{2}ds \right) \leq \nu C_{2}(p, M)(1+ \mathbb{E} \|\eta\|^{2p}).
\end{equation}
\end{proposition}
The main idea in their proof is to apply the curl to the solution and then use a stochastic Gronwall inequality offered by Lemma A.1 of [16]. By the same reasoning this inequality may be proved for $\{\widetilde{Z}^{\varepsilon}_{.}\}_{\varepsilon>0}$ with the difference of having one extra nonlinear term,
\begin{equation*}
\int_{0}^{t} \left(\text{curl}B\left(\widetilde{Z}^{\varepsilon}(s), u^{0}(s)\right), \text{curl} \widetilde{Z}^{\varepsilon}(s)\right)ds,
\end{equation*}
and bounding this term may be achieved by noting that
\begin{equation*}
\text{curl}B\left(\widetilde{Z}^{\varepsilon}(s), u^{0}(s)\right) = B(\widetilde{Z}^{\varepsilon}(s), \text{curl}u^{0}(s))
- B(\text{curl}\widetilde{Z}^{\varepsilon}(s), u^{0}(s)),
\end{equation*}
and applying inequality \eqref{(2.3)}. Then analogous bounds for $u^{\varepsilon}(t)$ and $\widetilde{Z}^{\varepsilon}(t)$ are,
\begin{flalign}\label{(3.14)}
&\mathbb{E}\left(\sup_{0\leq t\leq T} \|u^{\varepsilon}(t)\|^{2p} + \int_{0}^{T} \mid Au^{\varepsilon}(s)\mid^{2}ds\right)
\leq \varepsilon C_{2}(p,M)\left(1+\|u^{\varepsilon}(0)\|^{2p}\right)&\\ \nonumber
&=:\widetilde{C}_{1}(2p,\varepsilon),&
\end{flalign}
and
\begin{equation}\label{(3.15)}
\mathbb{E}\left(\sup_{0\leq t\leq T} \|\widetilde{Z}^{\varepsilon}(t)\|^{2p} + \int_{0}^{T} \mid A\widetilde{Z}^{\varepsilon}(s)\mid^{2}ds\right)
\leq \frac{1}{2\log \log \frac{1}{\varepsilon}} C_{2}(p,M) =: \widetilde{C}_{2}(2p,\varepsilon),
\end{equation}
respectively. Moreover, similar to the proof of Theorem 3.1 of [5], the following may be established,
\begin{eqnarray}
\sup_{0\leq t\leq T} \|u^{0}(t)\|^{2p} &\leq& C(M)(1+ \|u^{0}(0)\|^{2p}):= \widetilde{C}_{3}(2p),\label{(3.16)}\\
\sup_{0\leq t\leq T} \|X^{h}(t)\|^{2p} &\leq& C(M)(1+\|X^{h}(0)\|^{2p}) := \widetilde{C}_{4}(2p).\label{(3.17)}
\end{eqnarray}
\begin{lemma}\label{Lemm6} For every positive constant $R>0$ and $\beta>0$ there exists $n_{0}\in \mathbb{N}$ such that for all $n\geq n_{0}$ and $\varepsilon\in (0,\varepsilon_{0})$,
\begin{equation}\label{(3.18)}
P\left(\|\widetilde{Z}^{\varepsilon}(t)-\widetilde{Z}^{\varepsilon}(t_{i}^{n})\|_{\mathcal{E}(T)} >\beta\right) \leq \exp\left(-2R\log \log \frac{1}{\varepsilon}\right).
\end{equation}
\end{lemma}
\begin{proof}
Using the time discretization introduced earlier, we apply It$\hat{o}$'s formula to find for $t\in [t_{i}^{n}, t_{i+1}^{n}), s_{i}^{n}\in [0,t_{i}^{n}]$, and stopping time, \\
$\tau_{N}:= \inf\{t: \|\widetilde{Z}^{\varepsilon}(t)-\widetilde{Z}^{\varepsilon}(t_{i}^{n})\|_{\mathcal{E}(t)}^{2}>N\}$, where $N\in \mathbb{N}$,
\begin{eqnarray*}
&&\mid \widetilde{Z}^{\varepsilon}(t\wedge \tau_{N}) - \widetilde{Z}^{\varepsilon}(t_{i}^{n} \wedge \tau_{N})\mid^{2} + 2\int_{t_{i}^{n}\wedge \tau_{N}}^{t\wedge \tau_{N}} \|\widetilde{Z}^{\varepsilon}(s)-\widetilde{Z}^{\varepsilon}(s_{i}^{n})\|^{2}ds\\
&=&-2\int_{t_{i}^{n}\wedge \tau_{N}}^{t\wedge \tau_{N}} \left(A\widetilde{Z}^{\varepsilon}(s_{i}^{n}), \widetilde{Z}^{\varepsilon}(s)-\widetilde{Z}^{\varepsilon}(s_{i}^{n})\right)ds\\
&&-2\int_{t_{i}^{n} \wedge \tau_{N}}^{t\wedge \tau_{N}} \left(B(u^{\varepsilon}(s), \widetilde{Z}^{\varepsilon}(s)), \widetilde{Z}^{\varepsilon}(s)- \widetilde{Z}^{\varepsilon}(s_{i}^{n})\right)ds\\
&&-2\int_{t_{i}^{n}\wedge \tau_{N}}^{t\wedge \tau_{N}} \left(B(\widetilde{Z}^{\varepsilon}(s), u^{0}(s)), \widetilde{Z}^{\varepsilon}(s)-\widetilde{Z}^{\varepsilon}(s_{i}^{n})\right)ds\\
&&+\frac{2}{\sqrt{2\log \log \frac{1}{\varepsilon}}} \int_{t_{i}^{n}\wedge \tau_{N}}^{t\wedge \tau_{N}} \left(\widetilde{\sigma}(s,\widetilde{Z}^{\varepsilon}(s))dW(s), \widetilde{Z}^{\varepsilon}(s)-\widetilde{Z}^{\varepsilon}(s_{i}^{n})\right)\\
&&+ 2 \int_{t_{i}^{n}\wedge \tau_{N}}^{t\wedge \tau_{N}} \left(\widetilde{\sigma}(s, \widetilde{Z}^{\varepsilon}(s))h(s), \widetilde{Z}^{\varepsilon}(s)-\widetilde{Z}^{\varepsilon}(s_{i}^{n})\right)ds\\
&&+\frac{1}{2\log \log \frac{1}{\varepsilon}}\int_{t_{i}^{n}\wedge \tau_{N}}^{t\wedge \tau_{N}} \|\widetilde{\sigma}(s,\widetilde{Z}^{\varepsilon}(s))\|^{2}_{L_{Q}}ds\\
&=& I_{0}(t_{i}^{n}\wedge \tau_{N}, t\wedge \tau_{N}) + I_{1}(t_{i}^{n}\wedge \tau_N, t\wedge \tau_{N}) + I_{2}(t_{i}^{n} \wedge \tau_{N},t\wedge \tau_{N})\\
&&+ I_{3}(t_{i}^{n}\wedge \tau_N, t\wedge \tau_{N})+ I_{4}(t_{i}^{n}\wedge \tau_N, t\wedge \tau_{N})+I_{5}(t_{i}^{n}\wedge \tau_N, t\wedge \tau_{N}).
\end{eqnarray*}
Similar to the bounds in the Appendix derived for the proof of Lemma \ref{Lemm3}, we will take the supremum up to time $t\wedge \tau_{N}$ and then expectation and determine the following bounds for $\mathbb{E}\mid I_{j}(t_{i}^{n} \wedge \tau_{N}, t\wedge \tau_{N})\mid$ for $j=0,1,...,5, j\neq 3,$ for which the bound in \eqref{(3.15)} is used. For $I_{0}(t_{i}^{n}\wedge \tau_{N}, t\wedge \tau_{N})$, by applying the Young's inequality, we obtain,
\begin{flalign*}
&\mathbb{E}\mid I_{0}(t_{i}^{n}\wedge \tau_N, t\wedge \tau_{N})\mid \leq 2\mathbb{E}\int_{t_{i}^{n}\wedge \tau_{N}}^{t\wedge \tau_{N}} (A\widetilde{Z}^{\varepsilon}(s_{i}^{n}), \widetilde{Z}^{\varepsilon}(s))ds + 2\mathbb{E} \int_{t_{i}^{n}\wedge \tau_{N}}^{t\wedge \tau_{N}}\|\widetilde{Z}^{\varepsilon}(s_{i}^{n})\|^{2}ds&\\
&\hspace{.3cm} \leq 2\mathbb{E}\int_{t_{i}^{n}\wedge \tau_{N}}^{t\wedge \tau_{N}} \|\widetilde{Z}^{\varepsilon}(s_{i}^{n})\|\hspace{.3cm} \|\widetilde{Z}^{\varepsilon}(s)\|ds + 2\mathbb{E} \int_{t_{i}^{n}\wedge \tau_{N}}^{t\wedge \tau_{N}} \|\widetilde{Z}^{\varepsilon}(s_{i}^{n})\|^{2}ds&\\
&\hspace{.3cm} \leq 4 \mathbb{E}\int_{t_{i}^{n}\wedge \tau_{N}}^{t\wedge \tau_{N}} \|\widetilde{Z}^{\varepsilon}(s_{i}^{n})\|^{2} ds + \frac{1}{2} \mathbb{E} \int_{t_{i}^{n}\wedge \tau_{N}}^{t\wedge \tau_{N}} \|\widetilde{Z}^{\varepsilon}(s)\|^{2}&\\
&\hspace{.3cm} \leq \frac{9}{2} \widetilde{C}_{2}(2,\varepsilon) \mid t\wedge \tau_{N}-t_{i}^{n}\wedge \tau_{N}\mid.&
\end{flalign*}
Note that by Cauchy-Schwarz inequality,
\begin{equation*}
\mathbb{E}\int_{0}^{t} \|v(s)\|^{2} \mid v(s)\mid^{2}ds \leq \int_{0}^{t} \left(\mathbb{E}\|v(s)\|^{4}\right)^{\frac{1}{2}} \left(\mathbb{E}\mid v(s)\mid^{4}\right)^{\frac{1}{2}} ds.
\end{equation*}
Applying the above estimate along with \eqref{(2.3)} and Young's inequality, lead to,
\begin{eqnarray*}
&& \mathbb{E}\mid I_{1}(t_{i}^{n}\wedge \tau_{N}, t\wedge \tau_{N})\mid = 2\mid \mathbb{E} \int_{t_{i}^{n} \wedge \tau_{N}}^{t\wedge \tau_{N}} \left(B(u^{\varepsilon}(s), \widetilde{Z}^{\varepsilon}(s)), \widetilde{Z}^{\varepsilon}(s_{i}^{n})\right)ds\mid\\
&\leq& \frac{1}{2}\mathbb{E} \int_{t_{i}^{n} \wedge \tau_{N}}^{t\wedge \tau_{N}} \|u^{\varepsilon}(s)\| \hspace{.1cm} \mid u^{\varepsilon}(s)\mid ds+ 8\mathbb{E} \int_{t_{i}^{n} \wedge \tau_{N}}^{t\wedge \tau_{N}} \|\widetilde{Z}^{\varepsilon}(s)\| \hspace{.1cm} \mid \widetilde{Z}^{\varepsilon}(s)\mid \hspace{.1cm} \|\widetilde{Z}^{\varepsilon}(s_{i}^{n})\|^{2}ds\\
&\leq& \frac{1}{2} \mathbb{E} \int_{t_{i}^{n} \wedge \tau_{N}}^{t\wedge \tau_{N}} \|u^{\varepsilon}(s)\| \hspace{.1cm} \mid u^{\varepsilon}(s)\mid ds
+ 2 \mathbb{E} \int_{t_{i}^{n} \wedge \tau_{N}}^{t\wedge \tau_{N}} \|\widetilde{Z}^{\varepsilon} (s)\|^{2} \hspace{.1cm} \mid \widetilde{Z}^{\varepsilon}(s)\mid^{2}ds\\
&& + 8 \mathbb{E} \int_{t_{i}^{n} \wedge \tau_{N}}^{t\wedge \tau_{N}} \|\widetilde{Z}^{\varepsilon}(s_{i}^{n})\|^{4}ds\\
&\leq& \left(\frac{1}{2} \sqrt{K_{4}\widetilde{C}_{1}(2,\varepsilon)} + 2 \sqrt{\widetilde{C}_{2}(4,\varepsilon)\widetilde{M}_{2}(T,\varepsilon)} + 8 \widetilde{C}_{2}(4,\varepsilon)\right) \mid t\wedge \tau_{N}- t_{i}^{n}\wedge \tau_{N}\mid.
\end{eqnarray*}
Furthermore, by \eqref{(2.3)} and \eqref{(2.4)} we have,
\begin{flalign*}
&\mathbb{E}\mid I_{2}(t_{i}^{n} \wedge \tau_{N}, t\wedge \tau_{N})\mid&\\
&\hspace{.1cm} \leq 2 \mathbb{E} \mid\int_{t_{i}^{n} \wedge \tau_{N}}^{t\wedge \tau_{N}} b(\widetilde{Z}^{\varepsilon}(s), \widetilde{Z}^{\varepsilon}(s), u^{0}(s))ds +  \int_{t_{i}^{n} \wedge \tau_{N}}^{t\wedge \tau_{N}} b(\widetilde{Z}^{\varepsilon}(s), u^{0}(s), \widetilde{Z}^{\varepsilon}(s_{i}^{n}))ds\mid&\\
&\hspace{.1cm} \leq \mathbb{E} \int_{t_{i}^{n} \wedge \tau_{N}}^{t\wedge \tau_{N}} \|\widetilde{Z}^{\varepsilon}(s)\|^{2}ds
+ 2\mathbb{E} \int_{t_{i}^{n} \wedge \tau_{N}}^{t\wedge \tau_{N}} c \|u^{0}(s)\|_{L^{4}}^{4} \hspace{.1cm} \mid \widetilde{Z}^{\varepsilon}(s)\mid^{2}ds&\\
&\hspace{.1cm} + 2\mathbb{E} \int_{t_{i}^{n} \wedge \tau_{N}}^{t\wedge \tau_{N}} \|\widetilde{Z}^{\varepsilon}(s)\| \hspace{.1cm} \mid \widetilde{Z}^{\varepsilon}(s)\mid ds + \frac{1}{2} \int_{t_{i}^{n} \wedge \tau_{N}}^{t\wedge \tau_{N}} \|u^{0}(s)\|^{2} \hspace{.1cm} \mid u^{0}(s)\mid^{2}ds &\\
& \hspace{.1cm} + 2 \mathbb{E} \int_{t_{i}^{n} \wedge \tau_{N}}^{t\wedge \tau_{N}} \|\widetilde{Z}^{\varepsilon}(s_{i}^{n})\|^{4}ds&\\
&\hspace{.1cm} \leq \left(\widetilde{C}_{2}(2,\varepsilon) + 2K_{6}\widetilde{M}_{1}(T,\varepsilon) \widetilde{C}_{3}(2) + 2\sqrt{\widetilde{C}_{2}(2,\varepsilon) \widetilde{M}_{1}(T,\varepsilon)} + K_{6}\widetilde{C}_{3}(2) + \widetilde{C}_{2}(4,\varepsilon)\right)&\\
&\hspace{.1cm} \times \mid t\wedge \tau_{N} - t_{i}^{n} \wedge \tau_{N}\mid.&
\end{flalign*}
The Cauchy-Schwarz, H\"older's and Young's inequalities may again be invoked to obtain,
\begin{eqnarray*}
&&\mathbb{E}\mid I_{4}(t_{i}^{n}\wedge \tau_{N}, t\wedge \tau_{N})\mid \leq 2\mathbb{E} \int_{t_{i}^{n} \wedge \tau_{N}}^{t\wedge \tau_{N}}\|\widetilde{\sigma}(s,\widetilde{Z}^{\varepsilon}(s))\|_{L_{Q}} \hspace{.1cm} \mid h(s)\mid_{0} \hspace{.1cm} \mid \widetilde{Z}^{\varepsilon}(s)-\widetilde{Z}^{\varepsilon}(s_{i}^{n})\mid ds\\
&&\hspace{.1cm} \leq 2 \sqrt{N} \left(\int_{t_{i}^{n} \wedge \tau_{N}}^{t\wedge \tau_{N}}\left(\mathbb{E} \left(\|\widetilde{\sigma}(s, \widetilde{Z}^{\varepsilon}(s))\|_{L_{Q}} \hspace{.1cm} \mid \widetilde{Z}^{\varepsilon}(s)- \widetilde{Z}^{\varepsilon}(s_{i}^{n})\mid \right)\right)^{2}ds\right)^{1/2}\\
&&\hspace{.1cm} 2\sqrt{N} \left(\int_{t_{i}^{n} \wedge \tau_{N}}^{t\wedge \tau_{N}} \mathbb{E} \|\widetilde{\sigma}(s,\widetilde{Z}^{\varepsilon}(s))\|_{L_{Q}}^{2} \hspace{.1cm} \mathbb{E}\mid \widetilde{Z}^{\varepsilon}(s) - \widetilde{Z}^{\varepsilon}(s_{i}^{n})\mid^{2} ds\right)^{1/2}\\
&&\hspace{.1cm} \leq 2 \sqrt{NK_{9}} \left(\left(1+4\varepsilon \log \log \frac{1}{\varepsilon} \widetilde{C}_{2}(2,\varepsilon) + 2\widetilde{C}_{3}(2)\right)\left(2\widetilde{M}_{1}(T,\varepsilon)\right)\right)^{1/2}\\
&& \hspace{.1cm} \times \mid t\wedge \tau_{N} - t_{i}^{n} \wedge \tau_{N}\mid^{1/2}.
\end{eqnarray*}
Similarly, inequality \eqref{(2.19)} implies,
\begin{eqnarray}\label{(3.19)}
&&\mathbb{E}\mid I_{5}(t_{i}^{n}, t\wedge \tau_{N})\mid\nonumber \\
&&\leq \frac{K_{9}}{2\log \log \frac{1}{\varepsilon}} \left(1+ 4\varepsilon \log \log \frac{1}{\varepsilon} \widetilde{C}_{2}(2,\varepsilon) + 2\widetilde{C}_{3}(2)\right) \mid t\wedge \tau_{N} - t_{i}^{n}\wedge \tau_{N}\mid.
\end{eqnarray}
Since $\mid t\wedge \tau_{N} - t_{i}^{n}\wedge \tau_{N}\mid \leq T 2^{-n}$, then there exists $n_{0}\in \mathbb{N}$, such that for all $n\geq n_{0}$ and $j=0,1,...,5$ except $j=3$,
\begin{equation*}
P\left(I_{j}(t_{i}^{n}\wedge \tau_{N}, t\wedge \tau_{N})>\frac{\beta^{2}}{6}\right) \leq \exp\left(-2R \log \log \frac{1}{\varepsilon}\right),
\end{equation*}
by Chebyshev's inequality, for any given $R>0$ and $0<\varepsilon<\varepsilon_{2}$. As for $j=3$,
\begin{eqnarray*}
&&P\left(I_{3}(t_{i}^{n}\wedge \tau_{N}, t\wedge \tau_{N})> \frac{\beta^{2}}{6}\right)\\
&&= P\left(\int_{t_{i}^{n}\wedge \tau_{N}}^{t\wedge \tau_{N}} \left(\widetilde{\sigma}(s, \widetilde{Z}^{\varepsilon}(s))dW(s), \widetilde{Z}^{\varepsilon}(s)-\widetilde{Z}^{\varepsilon}(s_{i}^{n})\right)>\frac{\beta^{2}}{12} \sqrt{2\log \log \frac{1}{\varepsilon}}\right).
\end{eqnarray*}
Inspired by the proof of Theorem 3.2 in [12] we write,
\begin{eqnarray}\label{(3.20)}
&&\mathbb{E} \exp\left(\mid \int_{t_{i}^{n}\wedge \tau_{N}}^{t\wedge \tau_{N}} \left(\widetilde{\sigma}(s,\widetilde{Z}^{\varepsilon}(s))dW(s), \widetilde{Z}^{\varepsilon}(s)-\widetilde{Z}^{\varepsilon}(s_{i}^{n})\right)\mid^{2}\right)\\
&&= \mathbb{E} \lim_{m\rightarrow \infty} \sum_{k=0}^{m} \frac{1}{k!} \left(\int_{t_{i}^{n}\wedge \tau_{N}}^{t\wedge \tau_{N}} \left(\widetilde{\sigma}(s,\widetilde{Z}^{\varepsilon}(s))dW(s), \widetilde{Z}^{\varepsilon}(s)-\widetilde{Z}^{\varepsilon}(s_{i}^{n})\right)\right)^{2k}, \nonumber
\end{eqnarray}
and since by Burkholder-Davis-Gundy inequality,
\begin{eqnarray*}
&&\mathbb{E}\left(\int_{t_{i}^{n}\wedge \tau_{N}}^{t\wedge \tau_{N}}\left(\widetilde{\sigma}(s,\widetilde{Z}^{\varepsilon}(s))dW(s), \widetilde{Z}^{\varepsilon}(s)-\widetilde{Z}^{\varepsilon}(s_{i}^{n})\right)\right)^{2k}\\
&\leq& \mathbb{E}\left( \int_{t_{i}^{n}\wedge \tau_{N}}^{t\wedge \tau_{N}}\|\widetilde{\sigma}(s, \widetilde{Z}^{\varepsilon}(s))\|_{L_{Q}}^{2} \hspace{.1cm} \mid \widetilde{Z}^{\varepsilon}(s) - \widetilde{Z}^{\varepsilon}(s_{i}^{n})\mid^{2}ds\right)^{k}\\
&\leq& \frac{1}{2} \mathbb{E} \sup_{t_{i}^{n} \wedge \tau_{N} \leq s\leq t\wedge \tau_{N}}  \mid \widetilde{Z}^{\varepsilon}(s)- \widetilde{Z}^{\varepsilon}(s_{i}^{n})\mid^{4k}\\
&& \hspace{.1cm} + \frac{K_{9}}{2}\mathbb{E} \left(\int_{t_{i}^{n}\wedge \tau_{N}}^{t\wedge \tau_{N}} 1 + 4\varepsilon \log \log \frac{1}{\varepsilon} \|\widetilde{Z}^{\varepsilon}(s)\|^{2} + 2\|u^{0}(s)\|^{2}ds\right)^{2k}\\
&\leq& \widetilde{M}_{2}(T,\varepsilon)^{k} + \frac{K_{9}}{2}\left(1 + 4\varepsilon \log \log \frac{1}{\varepsilon} \widetilde{C}_{2}(4k, \varepsilon) + 2\widetilde{C}_{3}(4k)\right) \mid t\wedge \tau_{N}- t_{i}^{n}\wedge \tau_{N}\mid^{2k},
\end{eqnarray*}
we have by Chebyshev's inequality along with an application of Monotone convergence theorem on \eqref{(3.20)},
\begin{equation*}
P\left(I_{3}(t_{i}^{n}\wedge \tau_{N}, t\wedge \tau_{N})>\frac{\beta^{2}}{6}\right) \leq \exp\left(-2R\log \log \frac{1}{\varepsilon}\right),
\end{equation*}
for sufficiently large $n\in \mathbb{N}$, and noting that the above estimate holds for any $\beta>0$, inequality \eqref{(3.18)} is obtained.
\end{proof}
Next we aim to derive the required exponential bound for the second inequality given by \eqref{(3.7)}.
\begin{lemma}\label{Lemm7}
 For any $R>0$ and $\rho>0$, there exists $\eta>0, \beta>0,$ and $n_{0}\in \mathbb{N}\backslash \{0\}$ such that for all $\varepsilon\in(0,\varepsilon_{0})$ and $n\geq n_{0}$,
\begin{flalign}\label{(3.21)}
&P\left(\|\widetilde{Z}^{\varepsilon}(t)-X^{h}(t)\|_{\mathcal{E}(T)}\geq \rho, \left\|\frac{1}{\sqrt{2\log \log \frac{1}{\varepsilon}}} W\right\|_{L_{H}^{\infty}}<\eta, \right. \nonumber&\\
&\hspace{2.5cm}\left. \|\widetilde{Z}^{\varepsilon}(t)-\widetilde{Z}^{\varepsilon}(t_{i}^{n})\|_{\mathcal{E}(T)}\leq \beta\right)\leq \exp\left(-2R\log \log \frac{1}{\varepsilon}\right).&
\end{flalign}
\end{lemma}
\begin{proof}
Observe that under condition, $\|\widetilde{Z}^{\varepsilon}(t)-\widetilde{Z}^{\varepsilon}(t_{i}^{n})\|_{\mathcal{E}(T)}\leq \beta$,
\begin{equation*}
\rho \leq \|\widetilde{Z}^{\varepsilon}(t)-X^{h}(t)\|_{\mathcal{E}(T)}\leq \beta + \|X^{h}(t)-\widetilde{Z}^{\varepsilon}(t_{i}^{n})\|_{\mathcal{E}(T)},
\end{equation*}
Noting that Lemma \ref{Lemm6} holds for any $\beta>0$, then for any $\rho>0$, we may choose $\beta>0$ such that $\xi:= \rho-\beta>0$ to obtain,
\begin{equation*}
\|X^{h}(t)-\widetilde{Z}^{\varepsilon}(t_{i}^{n})\|_{\mathcal{E}(T)} \geq \xi.
\end{equation*}
For $t\in \Delta_{i}^{n}$ we have,
\begin{eqnarray*}
&& X^{h}(t)- \widetilde{Z}^{\varepsilon}(t_{i}^{n}) = X^{h}(0) - \int_{0}^{t_{i}^{n}} \left(AX^{h}(s) - A\widetilde{Z}^{\varepsilon}(s)\right)ds - \int_{t_{i}^{n}}^{t} AX^{h}(s)ds\\
&&- \int_{0}^{t_{i}^{n}} B\left(X^{h}(s)- \widetilde{Z}^{\varepsilon}(s), u^{0}(s)\right)ds - \int_{t_{i}^{n}}^{t} B(X^{h}(s), u^{0}(s))ds\\
&&- \int_{0}^{t} B(u^{0}(s), X^{h}(s))ds + \int_{0}^{t_{i}^{n}} B(u^{\varepsilon}(s), \widetilde{Z}^{\varepsilon}(s))ds + \int_{0}^{t} \sigma(s,u^{0}(s))h(s)ds\\
&& - \int_{0}^{t_{i}^{n}} \widetilde{\sigma}(s,\widetilde{Z}^{\varepsilon}(s))h(s)ds - \frac{1}{\sqrt{2\log \log \frac{1}{\varepsilon}}} \int_{0}^{t_{i}^{n}}\widetilde{\sigma}(s,\widetilde{Z}^{\varepsilon}(s))dW(s).
\end{eqnarray*}
Applying similar estimates as in the proof of Lemma \ref{Lemm6} with bound in \eqref{(3.17)} we arrive at,
\begin{eqnarray*}
&&\mathbb{E}\sup_{0\leq s\leq t} \mid X^{h}(s) - \widetilde{Z}^{\varepsilon}(s_{i}^{n})\mid^{2} + \mathbb{E}\int_{0}^{T} \|X^{h}(s)- \widetilde{Z}^{\varepsilon}(s_{i}^{n})\|^{2}ds\\
&\leq& \mid X^{h}(0)\mid^{2} + c_{1}t_{i}^{n} + c_{2}\mid t-t_{i}^{n}\mid + c_{3}t\\
&&+ \frac{2}{\sqrt{2\log \log \frac{1}{\varepsilon}}} \mathbb{E}\mid \int_{0}^{t_{i}^{n}} \left(\widetilde{\sigma}(s,\widetilde{Z}^{\varepsilon}(s))dW, X^{h}(s)-\widetilde{Z}^{\varepsilon}(s_{i}^{n})\right)\mid,
\end{eqnarray*}
where $c_{1}, c_{2}, c_{3}$ are constants that depend on $\varepsilon, T, \widetilde{C}_{2}(2,\varepsilon), \widetilde{C}_{3}(2)$ and $\widetilde{C}_{4}(2)$. Thus, noting that $t\in \Delta_{i}^{n}$, and following the same reasoning as in the proof of Lemma \ref{Lemm6} we achieve \eqref{(3.21)}.
\end{proof}
It may be observed that the above results can be generalized to achieve the moderate deviations for $\{u^{\varepsilon}(\cdot)\}_{\varepsilon>0}$ by the Azencott method with any $a(\varepsilon)$ satisfying the required conditions, $a(\varepsilon)>0$ and $a(\varepsilon)/\sqrt{\varepsilon} \rightarrow \infty$ as $\varepsilon$ tends to zero. Here we focused on the moderate deviation principle for the special case of $a(\varepsilon)= 1/\sqrt{2\log \log(1/\varepsilon)}$ to be able to achieve the LIL in the next section.
\section{S\lowercase{trassen's} C\lowercase{ompact} LIL}
We begin by showing the relative compactness property of the process $\{Z^{\varepsilon}(\cdot)\}_{\varepsilon\in(0,\varepsilon_{0})}$ in space $\mathcal{C}([0,T];H)\cap L^{2}(0,T;V)$ as required by Theorem \ref{Them2}. To this end, the following result proved in [26] is applied, where the statement of the lemma is modified to match our setting. We make the remark that since the global in time well-posedness of solutions is known for our process, the convergence offered by the theorem holds for any time $t\in [0,T]$ and the use of stopping times is not necessary.
\begin{theorem}(Lemma 5.1 in [26]).
Let $B_{1}$ and $B_{2}$ be Banach spaces with norms, $\|\cdot\|_{1}$ and $\|\cdot\|_{2}$, respectively such that $B_{2}\subset B_{1}$ is a continuous embedding. Suppose $\{X^{\varepsilon}\}_{\varepsilon>0}$ is a family of $B_{2}$-valued stochastic process defined on $\mathcal{E}(T):= \mathcal{C}([0,T];B_{1})\cap L^{2}(0,T;B_{2})$ a.s. If for some $M>1$ and $T>0$,
\begin{eqnarray}
&&\lim_{\varepsilon_{1}\rightarrow 0} \sup_{\varepsilon_{1}\geq \varepsilon_{2}} \mathbb{E} \|X^{\varepsilon_{1}} - X^{\varepsilon_{2}}\|_{\mathcal{E}(T)} =0, \label{(4.1)}\\
&& \lim_{S\rightarrow 0} \sup_{\varepsilon>0} P\left(\|X^{\varepsilon}\|_{\mathcal{E}(T\wedge S)} > \|X^{\varepsilon}(0)\|_{1} + M-1\right) =0, \label{(4.2)}
\end{eqnarray}
then for some subsequence, $\{X^{\varepsilon_{\ell}}\}_{\varepsilon_{\ell}>0}$ and process $X\in \mathcal{E}(T)$, the convergence, $\|X^{\varepsilon_{\ell}}-X\|_{\mathcal{E}(T)}\rightarrow 0$ holds a.s. as $\varepsilon_{\ell} \rightarrow 0$.
\end{theorem}
We proceed by verifying conditions \eqref{(4.1)} and \eqref{(4.2)} for our model. Observing that $Z^{\varepsilon}(t)$ has the same terms as $\widetilde{Z}^{\varepsilon}(t)$ with an addition of the term, $\int_{0}^{t}\widetilde{\sigma}(s,\widetilde{Z}^{\varepsilon}(s))h(s)ds$, we deduce estimates \eqref{(3.8)}-\eqref{(3.10)} and \eqref{(3.15)} for $Z^{\varepsilon}(t)$ and for simplicity keep the same notation for the bounds. \\
\hspace{.5cm} Let $V^{\varepsilon}(t)= Z^{\varepsilon_{1}}(t)-Z^{\varepsilon_{2}}(t)$, then applying the It$\hat{o}$ formula, taking the supremum over time up to $\tau_{M}:= \inf\{t: \|V^{\varepsilon}(t)\|_{\mathcal{E}(T)} \geq M\}$ for some $M>0$, and afterwards taking the expectation gives,
\begin{flalign*}
&\mathbb{E}\sup_{0\leq s\leq t\wedge \tau_{M}} \mid V^{\varepsilon}(s)\mid^{2} + 2\int_{0}^{t\wedge \tau_{M}} \mathbb{E} \|V^{\varepsilon}(s)\|^{2}ds&\\
&\hspace{.1cm}\leq 2\mathbb{E} \int_{0}^{t\wedge \tau_{M}} \left(-\left(B\left(Z^{\varepsilon_{1}}(s), u^{\varepsilon_{1}}(s)\right), V^{\varepsilon}(s)\right) + \left(B\left(Z^{\varepsilon_{2}}(s), u^{\varepsilon_{2}}(s)\right), V^{\varepsilon}(s)\right)\right)ds&\\
&\hspace{.1cm} + 2\mathbb{E} \sup_{0\leq s\leq t\wedge \tau_{M}}\int_{0}^{s}\left(\left(\frac{\widetilde{\sigma}(\ell, Z^{\varepsilon_{1}}(\ell))}{\sqrt{2\log \log \frac{1}{\varepsilon_{1}}}} - \frac{\widetilde{\sigma}(\ell, Z^{\varepsilon_{2}}(\ell))}{\sqrt{2\log \log \frac{1}{\varepsilon_{2}}}}\right) dW(\ell), V^{\varepsilon}(s)\right)&\\
&\hspace{.1cm} \mathbb{E}\int_{0}^{t\wedge \tau_{M}}\left( \frac{1}{2\log \log \frac{1}{\varepsilon_{1}}} \|\widetilde{\sigma}(s, Z^{\varepsilon_{1}}(s))\|_{L_{Q}}^{2} + \frac{1}{2\log \log \frac{1}{\varepsilon_{2}}} \|\widetilde{\sigma}(s,Z^{\varepsilon_{2}}(s))\|_{L_{Q}}^{2} \right)ds,&
\end{flalign*}
where, for the first term on the right hand side, estimates may be made along the same lines as those in the proof of Lemma \ref{Lemm3} to obtain,
\begin{flalign*}
&\lim_{\varepsilon_{1}\rightarrow 0} \sup_{\varepsilon_{1}\geq \varepsilon_{2}} \mathbb{E}\left(\sup_{s\in [0,t\wedge \tau_{M}]} \mid Z^{\varepsilon_{1}}(s)-Z^{\varepsilon_{2}}(s)\mid^{2} + \mathbb{E} \int_{0}^{t\wedge \tau_{N}} \|Z^{\varepsilon_{1}}(s)- Z^{\varepsilon_{2}}(s)\|^{2} ds\right)&\\
&=0,&
\end{flalign*}
which is equivalent to \eqref{(4.1)} after setting $M$ tend to infinity. As for \eqref{(4.2)}, applying It$\hat{o}$'s formula then taking the supremum over $t\in [0,T\wedge S]$ yields,
\begin{eqnarray*}
&&\sup_{t\in [0,T\wedge S]} \mid Z^{\varepsilon}(t)\mid^{2} + \int_{0}^{T\wedge S} \|Z^{\varepsilon}(s)\|^{2}ds \leq 2c \int_{0}^{T\wedge S} \mid Z^{\varepsilon}(s)\mid^{2}\|u^{\varepsilon}(s)\|_{L^{4}}^{4}ds\\
&&+ \frac{2}{\sqrt{2\log \log \frac{1}{\varepsilon}}} \sup_{s\in [0,T\wedge S]} \int_{0}^{s}\left(\widetilde{\sigma}(\ell, Z^{\varepsilon}(\ell))dW(\ell), Z^{\varepsilon}(\ell)\right)\\
&& + \frac{1}{2\log \log \frac{1}{\varepsilon}} \int_{0}^{T\wedge S} \|\widetilde{\sigma}(s, Z^{\varepsilon}(s))\|_{L_{Q}}^{2}ds,
\end{eqnarray*}
where inequality \eqref{(2.4)} was applied. Hence, we obtain,
\begin{eqnarray*}
&&P\left(\|Z^{\varepsilon}(t)\|^{2}_{\mathcal{E}(T\wedge S)} >M-1\right)\\
&& \leq P\left(2c\int_{0}^{T\wedge S} \mid Z^{\varepsilon}(s)\mid^{2} \|u^{\varepsilon}(s)\|_{L^{4}}^{4} ds>\frac{(M-1)}{3}\right)\\
&& + P\left(\frac{2}{\sqrt{2\log \log \frac{1}{\varepsilon}}} \sup_{s\in [0,T\wedge S]}\int_{0}^{s} \left(\widetilde{\sigma}(\ell, Z^{\varepsilon}(\ell))dW(\ell), Z^{\varepsilon}(\ell)\right)>\frac{(M-1)}{3}\right)\\
&&+ P\left(\frac{1}{2\log \log \frac{1}{\varepsilon}} \int_{0}^{T\wedge S} \|\widetilde{\sigma}(s, Z^{\varepsilon}(s))\|_{L_{Q}}^{2} ds>\frac{(M-1)}{3}\right).
\end{eqnarray*}
We may apply Doob's and Chebyshev inequalities for the first and remaining two probabilities, respectively, to arrive at,
\begin{eqnarray*}
&&P\left(\sup_{t\in[0,T\wedge S]} \mid Z^{\varepsilon}(t)\mid^{2} + \int_{0}^{T\wedge S} \|Z^{\varepsilon}(s)\|^{2} ds>M-1\right)\\
&& \leq \frac{6c}{(M-1)} \mathbb{E}\left(\int_{0}^{T\wedge S} \mid Z^{\varepsilon}(s)\mid^{2} \hspace{.1cm} \|u^{\varepsilon}(s)\|_{L^{4}}^{4} ds\right)\\
&&+ \frac{6K_{9}}{(M-1)\log \log \frac{1}{\varepsilon}} \mathbb{E} \int_{0}^{T\wedge S}(1+ 4\varepsilon \log \log \frac{1}{\varepsilon} \|Z^{\varepsilon}(s)\|^{2} + 2\|u^{0}(s)\|^{2}) \mid Z^{\varepsilon}(s)\mid^{2}ds\\
&& + \frac{3K_{9}}{2(M-1)\log \log \frac{1}{\varepsilon}} \mathbb{E}\int_{0}^{T\wedge S}(1+4\varepsilon \log \log \frac{1}{\varepsilon} \|Z^{\varepsilon}(s)\|^{2} + 2\|u^{0}(s)\|^{2}) ds\\
&&\leq K(\varepsilon, T)(T\wedge S),
\end{eqnarray*}
for a positive constant $K(\varepsilon,T)$, where the last inequality was achieved by applying H\"older's inequality similar to estimates in the proof of Lemma \ref{Lemm6}. Thus, by taking the supremum on $\varepsilon\in(0,\varepsilon_{0})$ and afterwards letting $S$ tend to zero we achieve condition \eqref{(4.2)} in our setting. \\
\hspace{.5cm} Next to verify that the set $L$ given in Theorem \ref{Them2} is the limit set, we prove by the following lemma that each element of set $L$ is a limit point of $\{Z^{\varepsilon}(\cdot)\}_{\varepsilon \in (0,\varepsilon_{0})}$. For this lemma, we let $c>1$ and consider the process depending on $1/c^{j}$ for $j\geq1$, instead of $\varepsilon>0$ for better presentation.
\begin{lemma}
For any $c>1$ and $g(t)\in L$, there exists $j_{0}>\frac{1}{\log c} \log \frac{1}{\varepsilon_{0}}$, such that,
\begin{equation*}
P\left(\|Z^{\frac{1}{c^{j}}}(t)-g(t)\|_{\mathcal{E}(T)} \leq \varepsilon \hspace{.2cm} i.o.\right)=1,
\end{equation*}
for all $j\geq j_{0}$ and $\varepsilon>0$.
\end{lemma}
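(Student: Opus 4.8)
The plan is to establish this divergence half of Strassen's law by a second Borel--Cantelli argument. Fix $g\in L$, so that $I(g)\le 1$, fix $\varepsilon>0$, write $\varepsilon_{j}:=1/c^{j}$, and set $A_{j}:=\{\|Z^{\varepsilon_{j}}(\cdot)-g\|_{\mathcal{E}(T)}\le \varepsilon\}$ together with the open ball $U:=\{v:\|v-g\|_{\mathcal{E}(T)}<\varepsilon\}$. The requirement $j\ge j_{0}>\frac{1}{\log c}\log\frac{1}{\varepsilon_{0}}$ is exactly what forces $\varepsilon_{j}<\varepsilon_{0}$, so that the moderate deviation principle of Theorem~\ref{MDPtheorem} and every estimate of Section 3 are available at each scale in play. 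The goal is $P(A_{j}\ \text{i.o.})=1$; combined with the relative compactness established above and the containment of all limit points in $L$, this identifies the limit set as exactly $L$ and proves Theorem~\ref{CompactLIL}.

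First I would extract a lower bound for $P(A_{j})$ from the large deviation lower bound \eqref{LDPlowerbound}, which $\{Z^{\varepsilon}\}$ satisfies with speed $a(\varepsilon)^{2}=1/(2\log\log\frac{1}{\varepsilon})$ and rate $I$ of \eqref{MDPrate}. The decisive structural fact is the homogeneity of the skeleton map: since \eqref{MDPcontrolled} is linear in the pair $(X^{h},h)$ with zero initial datum, uniqueness gives $\Gamma^{0}(\int_{0}^{\cdot}\lambda h)=\lambda\,\Gamma^{0}(\int_{0}^{\cdot}h)$ and hence $I(\lambda v)=\lambda^{2}I(v)$ for all $\lambda>0$. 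Consequently, for $\eta\in(0,1)$ the contracted target $g_{\eta}:=(1-\eta)g$ obeys $I(g_{\eta})=(1-\eta)^{2}I(g)<1$ and lies in $U$ once $\eta\|g\|_{\mathcal{E}(T)}<\varepsilon$. Applying \eqref{LDPlowerbound} to the open set $U$ then yields $\liminf_{j}\frac{1}{2\log\log(1/\varepsilon_{j})}\log P(A_{j})\ge -\inf_{U}I\ge -I(g_{\eta})$, so that for every $\gamma>0$ and all large $j$ one has $P(A_{j})\ge\exp(-2\log\log\frac{1}{\varepsilon_{j}}\,(I(g_{\eta})+\gamma))$; writing $\log\log c^{j}=\log(j\log c)$, this is a lower bound of polynomial type $P(A_{j})\gtrsim j^{-2(I(g_{\eta})+\gamma)}$.

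The main obstacle, and the technical core, is that the $A_{j}$ are badly correlated: every $Z^{\varepsilon_{j}}$ is driven by the same $Q$-Wiener process $W$ on the fixed horizon $[0,T]$, so there is none of the disjoint-time-increment independence that powers the classical Brownian law, and the bare exponent $2I(g)$ can reach $2$, rendering $\sum_{j}P(A_{j})$ convergent. To circumvent this I would pass to a sparse subsequence $c^{-j_{k}}$ and replace $A_{j_{k}}$ by events $\tilde{A}_{k}$ that depend only on approximately orthogonal portions of the noise (using the independence of increments of $W$, equivalently the Markov property of $u^{\varepsilon}$); for these the lower bound of the previous paragraph carries an increment-reduced exponent which, after the contraction $g\mapsto(1-\eta)g$ and for $c$ taken large, is strictly below $1$, so that $\sum_{k}P(\tilde{A}_{k})=\infty$. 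Since the $\tilde{A}_{k}$ are then asymptotically independent, the Kochen--Stone form of the second Borel--Cantelli lemma gives $P(\tilde{A}_{k}\ \text{i.o.})=1$, hence $P(A_{j}\ \text{i.o.})=1$; letting $\eta\downarrow0$ and $\varepsilon\downarrow0$ along countable sequences and intersecting the resulting almost-sure events finishes the argument. I expect the quantitative control of the decorrelation error --- proving that the shared low-scale part of $W$ perturbs $\|Z^{\varepsilon_{j_{k}}}-g\|_{\mathcal{E}(T)}$ negligibly so that $\tilde{A}_{k}$ genuinely entails $A_{j_{k}}$ --- to be the delicate point, and it is here that the moment bounds of Lemmas~\ref{Zlemma} and \ref{Zinequality} together with the Freidlin--Wentzell estimate \eqref{originalinequality} would be invoked.
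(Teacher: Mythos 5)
Your opening steps are individually sound: the MDP lower bound \eqref{LDPlowerbound} for $\{Z^{\varepsilon}\}$, the homogeneity $I(\lambda v)=\lambda^{2}I(v)$ coming from linearity of the skeleton equation \eqref{MDPcontrolled}, and the resulting polynomial lower bound $P(A_{j})\gtrsim j^{-2(I(g_{\eta})+\gamma)}$ are all correct. But the core of your argument --- manufacturing approximate independence of the events $A_{j_{k}}$ by sparsifying the scales and isolating ``orthogonal portions of the noise'' --- has no footing in this model. The parameter $\varepsilon$ scales the \emph{amplitude} of the noise, not time: for every $j$ the event $A_{j}$ is a functional of one and the same Wiener path $\{W(s):0\le s\le T\}$, through the same equation, on the same interval. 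Passing to a subsequence $j_{k}$ does not change which increments of $W$ the event depends on, so there are no fresh increments for a Kochen--Stone / second Borel--Cantelli argument to attach to; this is unlike the classical Brownian LIL, where the time rescaling $t\mapsto nt$ makes well-separated scales read off essentially disjoint increments. Consequently the ``increment-reduced exponent'' strictly below $1$ that your plan needs cannot be produced, and the decorrelation step you flag as the delicate point is in fact a missing argument with no evident repair: as proposed, the lemma remains unproved.

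The paper's proof sidesteps this entirely by importing the recurrence from the driving noise rather than re-deriving it at the solution level --- which is the whole reason the Azencott method was used in Section 3. Writing $g=X^{h}$ with $\frac{1}{2}\int_{0}^{T}|h(s)|_{0}^{2}ds\le 1$, it sets
$F_{j}:=\{\|Z^{1/c^{j}}(t)-g(t)\|_{\mathcal{E}(T)}\le\varepsilon\}$ and
$G_{j}:=\{\|(2\log\log c^{j})^{-1/2}W-h\|_{L^{\infty}(H_{0})}\le\eta\}$.
Strassen's compact LIL for the Wiener process gives $P(\limsup_{j}G_{j})=1$, and the Freidlin-Wentzell inequality \eqref{originalinequality}, applied with $R>1$ so that
\begin{equation*}
P\left(F_{j}^{c}\cap G_{j}\right)\le \exp\left(-2R\log\log c^{j}\right)\le \frac{C}{j^{2}},
\end{equation*}
is summable, yields $P(\limsup_{j}(F_{j}^{c}\cap G_{j}))=0$ by the \emph{first} Borel--Cantelli lemma; the two facts together force $P(F_{j}\text{ i.o.})=1$. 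So the hard ``infinitely often'' input is taken wholesale from Strassen's theorem for $W$, and only the easy half of Borel--Cantelli is needed. The idea missing from your proposal is precisely this transfer: condition on the noise event $G_{j}$ and use \eqref{originalinequality} to show that, up to a summable exceptional probability, closeness of the rescaled noise to $h$ entails closeness of $Z^{1/c^{j}}$ to $X^{h}=g$; your MDP lower bound is then neither needed nor, by itself, sufficient.
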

\begin{proof}
For a constant $\eta>0$, let,
\begin{equation*}
F_{j}:=\left\{\|Z^{\frac{1}{c^{j}}}(t)-g(t)\|_{\mathcal{E}(T)} \leq \varepsilon\right\}, \hspace{.5cm} G_{j}:= \left\{\left\|\frac{1}{\sqrt{2\log \log c^{j}}} W-h\right\|_{L_{H}^{\infty}} \leq \eta \right\},
\end{equation*}
where $g(t)$ is any element in the set $L$ and $h\in \mathcal{H}_{0}$ such that $g(t)=X^{h}(t)$ and \\
$\frac{1}{2}\int_{0}^{T} \mid h(s)\mid_{0}^{2}ds\leq \frac{N}{2}$. Since the Strassen's compact LIL is known for Brownian paths (see [36]), we have,
\begin{equation}\label{(4.3)}
P\left(\limsup_{j\rightarrow \infty} \left\|\frac{1}{\sqrt{2\log \log c^{j}}}W -h\right\|_{L_{H_{0}}^{\infty}} > \eta\right)=0,
\end{equation}
Also note that by the Freidlin-Wentzell inequality \eqref{(3.4)} we have for all $R>1$,
\begin{equation*}
P(F_{j}^{c} \cap G_{j})\leq \exp(-2R\log \log c^{j})\leq \frac{C}{j^{2R}},
\end{equation*}
which by the Borel-Cantelli lemma yields,
\begin{equation}\label{(4.4)}
P\left(\limsup_{j\rightarrow \infty} F_{j}^{c} \cap G_{j}\right)=0.
\end{equation}
Now by \eqref{(4.3)},
\begin{eqnarray*}
1= P\left(\limsup_{j\rightarrow \infty} G_{j}\right) &\leq& P\left(\limsup_{j\rightarrow \infty} G_{j}\cap F_{j}\right) + P\left(\limsup_{j\rightarrow \infty} G_{j}\cap F_{j}^{c}\right)\\
&\leq& P\left(\limsup_{j\rightarrow \infty} F_{j}\right),
\end{eqnarray*}
 giving $P(\limsup_{j\rightarrow \infty} F_{j})=1$ to complete the proof.
 \end{proof}
 Inspired by the proof of Proposition 3.2 of [11], to ensure that set $L$ is the only limit set of $\{Z^{\varepsilon}(\cdot)\}_{\varepsilon \in (0,\varepsilon_{0})}$, we let $\overline{L}:= \{g: \|g(t)-L\|_{\mathcal{E}(T)}\geq \varepsilon \}$, which implies by the definition of set $L$ that $I(g)>N/2 + \delta$ for some $\delta>0$. By the MDP result of the previous section, we have for the closed set, $\overline{L}$,
 \begin{equation*}
 \limsup_{j\rightarrow \infty} \frac{1}{2\log \log c^{j}} \log P\left(Z^{\frac{1}{c^{j}}}\in \overline{L}\right) \leq -I(g)< - \left(\frac{N}{2}+\delta\right),
 \end{equation*}
 which leads to,
 \begin{equation*}
 P(Z^{\frac{1}{c^{j}}} \in \overline{L}) \leq \exp\left(-2\left(\frac{N}{2}+\delta\right)\log \log c^{j}\right) \leq \frac{k}{j^{N+2\delta}},
 \end{equation*}
 giving,
 \begin{equation*}
 P\left(\limsup_{j\rightarrow \infty}\|Z^{\frac{1}{c^{j}}}(t)-\overline{L}\|_{\mathcal{E}(T)} \geq \varepsilon\right)=0,
 \end{equation*}
 by Borel-Cantelli lemma, where we have noted the assumption of $N$ being a natural number. Thus, we achieve the limit of $\|Z^{\frac{1}{c^{j}}}(t)-L\|_{\mathcal{E}(T)}$ to zero a.s.
\section*{D\lowercase{eclarations}}
The author declares that there is no conflict of interest.
\section*{A\lowercase{ppendix}}\label{secA1}
\hspace{.5cm} \emph{Proof of Lemma \ref{Lemm3}}\\
\hspace{.5cm} Letting $\tau_{N}:= \inf\{t>0:\sup_{0\leq t\leq T} \mid \widetilde{Z}^{\varepsilon}(t)\mid ^{2} + \int_{0}^{t}\|\widetilde{Z}^{\varepsilon}(s)\|^{2}ds >N\}$ we apply the It$\hat{o}$'s formula, then take the supremum over time up to $t\wedge \tau_{N}$ and afterwards expectation to obtain,
\begin{eqnarray*}
&&\mathbb{E} \sup_{0\leq s\leq t\wedge \tau_{N}} \mid \widetilde{Z}^{\varepsilon}(s)\mid^{2} + 2\mathbb{E}\int_{0}^{t\wedge \tau_{N}} \|\widetilde{Z}^{\varepsilon}(s)\|^{2}ds\\
&=& -2\mathbb{E}\int_{0}^{t\wedge \tau_{N}} \left(B\left(\widetilde{Z}^{\varepsilon}(s), u^{0}(s)\right), \widetilde{Z}^{\varepsilon}(s)\right) ds \\
&& + 2\mathbb{E}\int_{0}^{t\wedge \tau_{N}} \left(\widetilde{\sigma}\left(s,\widetilde{Z}^{\varepsilon}(s)\right)h(s), \widetilde{Z}^{\varepsilon}(s)\right)ds\\
&&+ \frac{2}{\sqrt{2\log \log \frac{1}{\varepsilon}}} \mathbb{E} \sup_{0\leq s\leq t\wedge \tau_{N}} \int_{0}^{s} \left(\widetilde{\sigma}(\ell, \widetilde{Z}^{\varepsilon}(\ell))dW(\ell), \widetilde{Z}^{\varepsilon}(\ell)\right)\\
&&+ \frac{1}{2\log \log \frac{1}{\varepsilon}} \mathbb{E} \int_{0}^{t\wedge \tau_{N}} \|\widetilde{\sigma}(s,\widetilde{Z}^{\varepsilon}(s))\|_{L_{Q}}^{2}ds\\
&=& I_{1}(t\wedge \tau_{N}) + I_{2}(t\wedge \tau_{N}) + \mathbb{E}\sup_{0\leq s\leq t\wedge \tau_{N}}I_{3}(s)+I_{4}(t\wedge \tau_{N}).
\end{eqnarray*}
We proceed to estimate the above terms. Using inequality \eqref{(2.4)} we have,
\begin{flalign}\label{(5.1)}
&I_{1}(t\wedge \tau_{N}) = 2\mathbb{E} \int_{0}^{t\wedge \tau_{N}} b(\widetilde{Z}^{\varepsilon}(s), \widetilde{Z}^{\varepsilon}(s), u^{0}(s))ds&\nonumber \\
&\leq \int_{0}^{t\wedge \tau_{N}} \mathbb{E}\|\widetilde{Z}^{\varepsilon}(s)\|^{2}ds + 2c\int_{0}^{t\wedge \tau_{N}} \mathbb{E} \sup_{0\leq \ell\leq s} \mid \widetilde{Z}^{\varepsilon}(\ell)\mid^{2} \|u^{0}(s)\|^{4}_{L^{4}}ds.&
\end{flalign}
By Cauchy-Schwarz and Young's inequalities,
\begin{eqnarray*}
I_{2}(t\wedge \tau_{N}) &\leq& \frac{1}{2}\mathbb{E} \int_{0}^{t\wedge \tau_{N}} \|\widetilde{\sigma}(s,\widetilde{Z}^{\varepsilon}(s))\|_{L_{Q}}^{2} ds + 2\mathbb{E} \int_{0}^{t\wedge \tau_{N}}\mid h(s)\mid_{0}^{2}\hspace{.1cm}\mid\widetilde{Z}^{\varepsilon}(s)\mid^{2}ds\\
&\leq& \frac{K_{9}}{2}(T+2K_{6}) + 2\varepsilon K_{9} \left(\log \log \frac{1}{\varepsilon}\right) \mathbb{E}\int_{0}^{t\wedge \tau_{N}} \|\widetilde{Z}^{\varepsilon}(s)\|^{2}ds\\
&& + 2\mathbb{E}\int_{0}^{t\wedge \tau_{N}} \sup_{0\leq \ell\leq s}\mid \widetilde{Z}^{\varepsilon}(\ell)\mid^{2}\hspace{.1cm} \mid h(s)\mid_{0}^{2}ds,
\end{eqnarray*}
where inequality \eqref{(2.19)} was also applied. Thanks to the Burkholder-Davis-Gundy inequality and inequality \eqref{(2.11)} we obtain,
\begin{flalign*}
& \sup_{0\leq s\leq t\wedge \tau_{N}} I_{3}(s)\leq \frac{6}{\sqrt{2\log \log \frac{1}{\varepsilon}}} \mathbb{E}\left(\int_{0}^{t\wedge \tau_{N}} \|\widetilde{\sigma}(s,\widetilde{Z}^{\varepsilon}(s))\|_{L_{Q}}^{2} \hspace{.1cm} \mid \widetilde{Z}^{\varepsilon}(s)\mid^{2}ds\right)^{1/2}&\\
& \leq \frac{6\sqrt{K_{9}}}{\sqrt{2\log \log \frac{1}{\varepsilon}}} \mathbb{E}\left(\int_{0}^{t\wedge \tau_{N}} (1+ 4\varepsilon \log \log \frac{1}{\varepsilon}\|\widetilde{Z}^{\varepsilon}(s)\|^{2} + 2\|u^{0}(s)\|^{2}) \mid \widetilde{Z}^{\varepsilon}(s)\mid^{2}ds\right)^{1/2}&\\
&\leq \frac{1}{2}\mathbb{E} \sup_{0\leq s\leq t\wedge \tau_{N}} \mid \widetilde{Z}^{\varepsilon}(s)\mid^{2}&\\
& + \frac{9K_{9}}{\log \log \frac{1}{\varepsilon}}\int_{0}^{t\wedge \tau_{N}} (1+ 4\varepsilon \log \log \frac{1}{\varepsilon}\mathbb{E} \|\widetilde{Z}^{\varepsilon}(s)\|^{2} + 2\|u^{0}(s)\|^{2})ds&\\
&\leq \frac{1}{2} \mathbb{E} \sup_{0\leq s\leq t\wedge \tau_{N}} \mid \widetilde{Z}^{\varepsilon}(s)\mid^{2} + \frac{9K_{9}}{\log \log \frac{1}{\varepsilon}}(T+2K_{6}) + 36\varepsilon K_{9} \int_{0}^{t\wedge \tau_{N}} \mathbb{E} \|\widetilde{Z}^{\varepsilon}(s)\|^{2}ds,&
\end{flalign*}
In addition,
\begin{equation*}
I_{4}(t\wedge \tau_{N}) \leq \frac{K_{9}}{2\log \log \frac{1}{\varepsilon}} (T+2K_{6}) + 2\varepsilon K_{9}\mathbb{E} \int_{0}^{t\wedge \tau_{N}} \|\widetilde{Z}^{\varepsilon}(s)\|^{2}ds.
\end{equation*}
Hence, we arrive at,
\begin{flalign*}
&\frac{1}{2} \mathbb{E}\sup_{0\leq s\leq t\wedge \tau_{N}} \mid \widetilde{Z}^{\varepsilon}(s)\mid ^{2} + \left(1-36\varepsilon K_{9} -2\varepsilon K_{9} \log \log \frac{1}{\varepsilon}\right) \int_{0}^{t\wedge \tau_{N}} \mathbb{E} \|\widetilde{Z}^{\varepsilon}(s)\|^{2}ds&\\
&\leq M_{1}(\varepsilon, T) + \int_{0}^{t\wedge \tau_{N}}\mathbb{E}\sup_{0\leq \ell\leq s} \mid \widetilde{Z}^{\varepsilon}(\ell)\mid^{2} \left(2c\|u^{0}(s)\|^{4}_{L^{4}} + 2\mid h(s)\mid_{0}^{2}\right)ds,&
\end{flalign*}
where,
\begin{equation*}
M_{1}(\varepsilon, T) := \frac{K_{9}}{\log \log \frac{1}{\varepsilon}} \left(9T + 18K_{6} + \frac{T}{2} + K_{6}\right) + \frac{K_{9}}{2} \left(T+ 2K_{6}\right).
\end{equation*}
Observe that to ensure the condition, $1-36\varepsilon K_{9}-2\varepsilon K_{9}\log \log 1/\varepsilon >0$, we need $\varepsilon<1$ and $\varepsilon < 1/(36K_{9})$. Now an application of Gronwall's inequality yields,
\begin{eqnarray*}
&& \mathbb{E} \sup_{0\leq s\leq t\wedge \tau_{N}} \mid \widetilde{Z}^{\varepsilon}(s)\mid^{2} + \int_{0}^{t\wedge \tau_{N}} \mathbb{E} \|\widetilde{Z}^{\varepsilon}(s)\|^{2}ds\\
&\leq& K M_{1}(\varepsilon, T)\exp\left(\int_{0}^{t\wedge \tau_{N}} \left(2c\|u^{0}(s)\|^{4}_{L^{4}} + 2\mid h(s)\mid_{0}^{2}\right)ds\right) =: \widetilde{M}_{1}(\varepsilon, T).
\end{eqnarray*}
The result follows by noting that $h\in \mathcal{H}_{0}$, using inequality \eqref{(2.12)} and letting $N$ tend to infinity. \\
\hspace{.5cm} We prove the following lemma in which the first inequality is similar to the proof of Lemma 3.2 in [38]; however, we derive it here for our setting for completeness. \\
\hspace{.5cm} \emph{Proof of Lemma \ref{Lemm4}}\\
\hspace{.5cm} To achieve \eqref{(3.9)}, we let, \\
$\tau_{N}:= \inf \{t>0: \sup_{0\leq s\leq t} \mid \widetilde{Z}^{\varepsilon}(s)\mid^{4} + \int_{0}^{t}\mid\widetilde{Z}^{\varepsilon}(s)\mid^{2} \|\widetilde{Z}^{\varepsilon}(s)\|^{2}ds>N\}$ and applying the It$\hat{o}$'s formula first to $\mid \widetilde{Z}^{\varepsilon}(s)\mid^{2}$ then to the map $x\mapsto x^{2}$ we obtain,
\begin{equation*}
d(\mid \widetilde{Z}^{\varepsilon}(t)\mid ^{2})^{2} = 2\mid \widetilde{Z}^{\varepsilon}(t)\mid^{2} d\mid \widetilde{Z}^{\varepsilon}(t)\mid^{2} + d< \mid\widetilde{Z}^{\varepsilon}(s)\mid^{2}>_{t}.
\end{equation*}
Namely,
\begin{eqnarray*}
\mid \widetilde{Z}^{\varepsilon}(t\wedge \tau_{N})\mid^{4}&\leq& -4 \int_{0}^{t\wedge \tau_{N}} \mid \widetilde{Z}^{\varepsilon}(s)\mid^{2}\hspace{.1cm} \|\widetilde{Z}^{\varepsilon}(s)\|^{2}ds \\
&&- 4 \int_{0}^{t\wedge \tau_{N}} \mid \widetilde{Z}^{\varepsilon}(s)\mid^{2} \left(B(\widetilde{Z}^{\varepsilon}(s), u^{0}(s)), \widetilde{Z}^{\varepsilon}(s)\right)ds\\
&&+ \frac{4}{\sqrt{2\log \log \frac{1}{\varepsilon}}} \int_{0}^{t\wedge \tau_{N}} \mid \widetilde{Z}^{\varepsilon}(s)\mid^{2} \left(\widetilde{\sigma}(s, \widetilde{Z}^{\varepsilon}(s))dW(s), \widetilde{Z}^{\varepsilon}(s)\right)\\
&&+4\int_{0}^{t\wedge \tau_{N}} \mid \widetilde{Z}^{\varepsilon}(s)\mid^{2} \left(\widetilde{\sigma}(s,\widetilde{Z}^{\varepsilon}(s))h(s), \widetilde{Z}^{\varepsilon}(s)\right)ds\\
&&+ \frac{3}{\log \log \frac{1}{\varepsilon}} \int_{0}^{t\wedge \tau_{N}} \mid \widetilde{Z}^{\varepsilon}(s)\mid^{2} \|\widetilde{\sigma}(s, \widetilde{Z}^{\varepsilon}(s))\|_{L_{Q}}^{2}ds\\
&=& I_{0}(t\wedge \tau_{N}) + I_{1}(t\wedge \tau_{N})+ I_{2}(t\wedge \tau_{N}) + I_{3}(t\wedge \tau_{N}) + I_{4}(t\wedge \tau_{N}).
\end{eqnarray*}
Now we take the supremum over time and then expectation to arrive at,
\begin{eqnarray*}
&& \mathbb{E}\sup_{0\leq s\leq t\wedge \tau_{N}} \mid \widetilde{Z}^{\varepsilon}(s)\mid^{4} + 4 \mathbb{E} \int_{0}^{t\wedge \tau_{N}} \mid \widetilde{Z}^{\varepsilon}(s)\mid^{2}\|\widetilde{Z}^{\varepsilon}(s)\|^{2}ds\\
&&\leq \mathbb{E} I_{1}(t\wedge \tau_{N}) + \mathbb{E}\sup_{0\leq s\leq t\wedge \tau_{N}} I_{2}(s) + \mathbb{E} I_{3}(t\wedge \tau_{N}) + \mathbb{E}I_{4}(t\wedge \tau_{N}).
\end{eqnarray*}
Similar to estimates in the proof of previous lemma we find,
\begin{eqnarray*}
&&\mathbb{E}I_{1}(t\wedge \tau_{N}) = 4\mathbb{E}\int_{0}^{t \wedge \tau_{N}} \mid \widetilde{Z}^{\varepsilon}(s)\mid^{2} B(\widetilde{Z}^{\varepsilon}(s), \widetilde{Z}^{\varepsilon}(s), u^{0}(s))ds\\
&\leq& 2\mathbb{E} \int_{0}^{t\wedge \tau_{N}} \mid \widetilde{Z}^{\varepsilon}(s)\mid^{2} \|\widetilde{Z}^{\varepsilon}(s)\|^{2} ds + 4c \mathbb{E} \int_{0}^{t\wedge \tau_{N}} \sup_{0\leq \ell \leq s} \mid \widetilde{Z}^{\varepsilon}(\ell)\mid^{4} \|u^{0}(s)\|^{4}_{L^{4}}ds.
\end{eqnarray*}
By Burkholder-Davis-Gunday inequality,
\begin{eqnarray*}
&& \mathbb{E}\sup_{0\leq s\leq t} I_{2}(s)\leq \frac{12}{\sqrt{2\log \log \frac{1}{\varepsilon}}} \mathbb{E}\left(\int_{0}^{t\wedge \tau_{N}} \mid \widetilde{Z}^{\varepsilon}(s)\mid^{6} \|\widetilde{\sigma}(s, \widetilde{Z}^{\varepsilon}(s))\|_{L_{Q}}^{2} ds\right)^{1/2}\\
&\leq& \frac{12}{\sqrt{2\log \log \frac{1}{\varepsilon}}} \mathbb{E} \left(\sup_{0\leq s\leq t\wedge \tau_{N}} \mid \widetilde{Z}^{\varepsilon}(s)\mid^{4} \int_{0}^{t\wedge \tau_{N}} \mid \widetilde{Z}^{\varepsilon}(s)\mid^{2}\|\widetilde{\sigma}(s,\widetilde{Z}^{\varepsilon}(s))\|_{L_{Q}}^{2} ds\right)^{1/2}\\
&\leq& \frac{1}{2} \mathbb{E}\sup_{0\leq s\leq t\wedge \tau_{N}} \mid \widetilde{Z}^{\varepsilon}(s)\mid^{4} + 144 \varepsilon K_{9}\mathbb{E} \int_{0}^{t\wedge \tau_{N}} \mid \widetilde{Z}^{\varepsilon}(s)\mid^{2} \|\widetilde{Z}^{\varepsilon}(s)\|^{2}ds\\
&&+ \frac{36K_{9}}{\log \log \frac{1}{\varepsilon}} \left(T+2K_{6}\right)\widetilde{M}_{1}(T,\varepsilon).
\end{eqnarray*}
Inequality \eqref{(2.19)} may be used again to yield,
\begin{eqnarray*}
&&\mathbb{E}I_{3}(t\wedge \tau_{N}) \leq 4 \mathbb{E}\int_{0}^{t\wedge \tau_{N}} \mid \widetilde{Z}^{\varepsilon}(s)\mid^{2} \|\widetilde{\sigma}(s, \widetilde{Z}^{\varepsilon}(s))\|_{L_{Q}} \mid h(s)\mid_{0} \mid \widetilde{Z}^{\varepsilon}(s)\mid ds\\
&\leq& 8 \mathbb{E} \int_{0}^{t\wedge \tau_{N}} \sup_{0\leq \ell \leq s} \mid \widetilde{Z}^{\varepsilon}(s)\mid^{4}\mid h(s)\mid_{0}ds\\
&& + \frac{K_{9}}{2}\mathbb{E}\int_{0}^{t\wedge \tau_{N}} \left(1+ 4\varepsilon \log \log \frac{1}{\varepsilon}\|\widetilde{Z}^{\varepsilon}(s)\|^{2} + 2\|u^{0}(s)\|^{2}\right) \mid\widetilde{Z}^{\varepsilon}(s)\mid^{2}ds\\
&\leq& 8 \mathbb{E} \int_{0}^{t\wedge \tau_{N}} \sup_{0\leq \ell \leq s}\mid \widetilde{Z}^{\varepsilon}(s)\mid^{4} \mid h(s)\mid_{0}^{2}ds + K_{9} \widetilde{M}_{1}(T,\varepsilon)\left(\frac{T}{2} + K_{6}\right)\\
&&+ 2 \varepsilon K_{9}\log \log \frac{1}{\varepsilon} \mathbb{E}\int_{0}^{t\wedge \tau_{N}} \|\widetilde{Z}^{\varepsilon}(s)\|^{2} \mid \widetilde{Z}^{\varepsilon}(s)\mid^{2}ds,
\end{eqnarray*}
and
\begin{eqnarray*}
\mathbb{E}I_{4}(t\wedge \tau_{N}) &\leq& \frac{3K_{9}}{\log \log \frac{1}{\varepsilon}} \widetilde{M}_{1}(T,\varepsilon) \left(T+ 2K_{6}\right)\\
&& + 12 \varepsilon K_{9}\mathbb{E} \int_{0}^{t\wedge \tau_{N}} \mid \widetilde{Z}^{\varepsilon}(s)\mid^{2}\|\widetilde{Z}^{\varepsilon}(s)\|^{2}ds.
\end{eqnarray*}
Thus, we have,
\begin{eqnarray*}
&& \frac{1}{2} \mathbb{E}\sup_{0\leq s\leq t\wedge \tau_{N}} \mid \widetilde{Z}^{\varepsilon}(s)\mid^{4}\\
&& + \left(2-156 \varepsilon K_{9} -2\varepsilon K_{9}\log \log \frac{1}{\varepsilon}\right) \mathbb{E}\int_{0}^{t\wedge \tau_{N}}\mid \widetilde{Z}^{\varepsilon}(s)\mid^{2} \|\widetilde{Z}^{\varepsilon}(s)\|^{2}ds\\
&\leq& M_{2}(\varepsilon, T) + \mathbb{E}\int_{0}^{t\wedge \tau_{N}} \sup_{0\leq \ell \leq s} \mid \widetilde{Z}^{\varepsilon}(s)\mid^{4} \left(4c\|u^{0}(s)\|_{L^{4}}^{4} + 8\mid h(s)\mid_{0}^{2}\right)ds,
\end{eqnarray*}
where,
\begin{equation*}
M_{2}(\varepsilon, T):= \frac{39 K_{9}\widetilde{M}_{1}(T, \varepsilon)}{\log \log \frac{1}{\varepsilon}} (T+ 2K_{6}) + K_{9}\widetilde{M}_{1}(T,\varepsilon) \left(\frac{T}{2} + K_{6}\right).
\end{equation*}
Here we require $\varepsilon<1$ and $\varepsilon<1/(78K_{9})$, then by sending $N$ to go to infinity and applying the Gronwall's inequality we arrive at,
\begin{eqnarray*}
&&\mathbb{E}\sup_{0\leq s\leq t} \mid \widetilde{Z}^{\varepsilon}(s)\mid^{4} + \mathbb{E} \int_{0}^{T} \mid \widetilde{Z}^{\varepsilon}(s)\mid^{2} \|\widetilde{Z}^{\varepsilon}(s)\|^{2}ds\\
&\leq& M_{2}(\varepsilon, T) \exp\left(c \int_{0}^{t\wedge \tau_{N}} 4c\|u^{0}(s)\|_{L^{4}}^{4} + 8\mid h(s)\mid_{0}^{2}ds\right) =: \widetilde{M}_{2}(\varepsilon, T).
\end{eqnarray*}
Next we use an induction argument to obtain \eqref{(3.10)}. Lemma \ref{Lemm3} confirms the result for $p=1$. Assume that the inequality is true for $p-1$ with its upperbound denoted as $\widetilde{M}_{p-1}(T,\varepsilon)$. Analogous to the previous case, let $\widetilde{\tau}_{N}:= \inf\{t>0: \sup_{0\leq s\leq t} \mid \widetilde{Z}^{\varepsilon}(s)\mid^{2p} + 2p\int_{0}^{T}\mid \widetilde{Z}^{\varepsilon}(s)\mid^{2(p-1)} \|\widetilde{Z}^{\varepsilon}(s)\|^{2}ds>N\}$ then we apply the It$\hat{o}$'s formula first to $\mid \widetilde{Z}^{\varepsilon}(s)\mid^{2}$ and then to the map $x\mapsto x^{p}$ as follows,
\begin{eqnarray*}
d\mid \widetilde{Z}^{\varepsilon}(t)\mid^{2p} &=& p\left(\mid\widetilde{Z}^{\varepsilon}(t)\mid^{2}\right)^{p-1} d\mid\widetilde{Z}^{\varepsilon}(t)\mid^{2} \\
&&+ \frac{1}{2}p(p-1) \mid \widetilde{Z}^{\varepsilon}(t)\mid^{2(p-2)} d<\mid\widetilde{Z}^{\varepsilon}(t)\mid^{2}>_{t}.
\end{eqnarray*}
More precisely,
\begin{flalign*}
&\mid \widetilde{Z}^{\varepsilon}(t \wedge \widetilde{\tau}_{N})\mid^{2p} = -2p \int_{0}^{t\wedge \widetilde{\tau}_{N}} \mid \widetilde{Z}^{\varepsilon}(s)\mid^{2(p-1)} \|\widetilde{Z}^{\varepsilon}(s)\|^{2}ds&\\
&-2p \int_{0}^{t\wedge \widetilde{\tau}_{N}} \mid \widetilde{Z}^{\varepsilon}(s)\mid^{2(p-1)} \left(B(\widetilde{Z}^{\varepsilon}(s), u^{0}(s)), \widetilde{Z}^{\varepsilon}(s)\right)ds&\\
&+ \frac{2p}{\sqrt{2\log \log \frac{1}{\varepsilon}}} \int_{0}^{t\wedge \widetilde{\tau}_{N}} \mid \widetilde{Z}^{\varepsilon}(s)\mid^{2(p-1)} \left(\widetilde{\sigma}(s, \widetilde{Z}^{\varepsilon}(s))dW(s), \widetilde{Z}^{\varepsilon}(s)\right)&\\
&+2p \int_{0}^{t\wedge \widetilde{\tau}_{N}} \mid \widetilde{Z}^{\varepsilon}(s)\mid^{2(p-1)} \left(\widetilde{\sigma}(s,\widetilde{Z}^{\varepsilon}(s))h(s), \widetilde{Z}^{\varepsilon}(s)\right)ds&\\
&+ \frac{p}{2\log \log \frac{1}{\varepsilon}} \int_{0}^{t\wedge \widetilde{\tau}_{N}} \mid \widetilde{Z}^{\varepsilon}(s)\mid^{2(p-1)} \|\widetilde{\sigma}(s, \widetilde{Z}^{\varepsilon}(s)) \|_{L_{Q}}^{2}ds&\\
&+ \frac{2p(p-1)}{\log \log \frac{1}{\varepsilon}} \int_{0}^{t\wedge \widetilde{\tau}_{N}} \left(\widetilde{\sigma}(s, \widetilde{Z}^{\varepsilon}(s)), \widetilde{Z}^{\varepsilon}(s)\right)^{2} \mid \widetilde{Z}^{\varepsilon}(s)\mid^{2(p-2)} ds&\\
&= I_{0}(t\wedge \widetilde{\tau}_{N}) +  I_{1}(t\wedge \widetilde{\tau}_{N}) +  I_{2}(t\wedge \widetilde{\tau}_{N}) +  I_{3}(t\wedge \widetilde{\tau}_{N}) +  I_{4}(t\wedge \widetilde{\tau}_{N}) +  I_{5}(t\wedge \widetilde{\tau}_{N}).&
\end{flalign*}
We take the supremum on time up to $t\wedge \tau_{N}$, and afterwards expectation and estimate,
\begin{eqnarray*}
&& \mathbb{E} I_{1}(t\wedge \widetilde{\tau}_{N}) = 2p \mathbb{E} \int_{0}^{t\wedge \widetilde{\tau}_{N}} \mid \widetilde{Z}^{\varepsilon}(s)\mid^{2(p-1)} b(\widetilde{Z}^{\varepsilon}(s), \widetilde{Z}^{\varepsilon}(s), u^{0}(s))ds\\
&\leq& p \mathbb{E} \int_{0}^{t\wedge \widetilde{\tau}_{N}} \mid\widetilde{Z}^{\varepsilon}(s)\mid^{2(p-1)} \|\widetilde{Z}^{\varepsilon}(s)\|^{2} ds\\
&& + 2p c \mathbb{E} \int_{0}^{t\wedge \widetilde{\tau}_{N}} \sup_{0\leq \ell \leq s} \mid \widetilde{Z}^{\varepsilon}(s)\mid^{2p} \|u^{0}(s)\|_{L^{4}}^{4}ds.
\end{eqnarray*}
With the help of Burkholder-Davis-Gundy inequality,
\begin{eqnarray*}
&&\mathbb{E}\sup_{0\leq s\leq t\wedge \widetilde{\tau}_{N}} I_{2}(s)\\
&\leq& \frac{6p}{\sqrt{2\log \log \frac{1}{\varepsilon}}} \mathbb{E}\left( \int_{0}^{t\wedge \widetilde{\tau}_{N}} \mid \widetilde{Z}^{\varepsilon}(s)\mid^{4(p-1)} \|\widetilde{\sigma}(s, \widetilde{Z}^{\varepsilon}(s))\|_{L_{Q}}^{2} \mid \widetilde{Z}^{\varepsilon}(s)\mid^{2}ds\right)^{1/2}\\
&\leq& \frac{1}{2} \mathbb{E} \sup_{0\leq s\leq t\wedge \widetilde{\tau}_{N}} \mid \widetilde{Z}^{\varepsilon}(s)\mid^{2p} \\
&&+ \frac{9p^{2}}{\log \log \frac{1}{\varepsilon}} \mathbb{E} \int_{0}^{t\wedge \widetilde{\tau}_{N}} \mid \widetilde{Z}^{\varepsilon}(s)\mid^{2(p-1)} \|\widetilde{\sigma}(s,\widetilde{Z}^{\varepsilon}(s))\|_{L_{Q}}^{2}ds\\
&\leq& \frac{1}{2} \mathbb{E}\sup_{0\leq s\leq t\wedge \widetilde{\tau}_{N}} \mid \widetilde{Z}^{\varepsilon}(s)\mid^{2p} + \frac{9p^{2}K_{9}}{\log \log \frac{1}{\varepsilon}} \widetilde{M}_{p-1}(T,\varepsilon)(T+ 2K_{6})\\
&& + 36 p^{2}\varepsilon K_{9} \mathbb{E} \int_{0}^{t \wedge \widetilde{\tau}_{N}} \|\widetilde{Z}^{\varepsilon}(s)\|^{2} \mid \widetilde{Z}^{\varepsilon}(s)\mid^{2(p-1)} ds.
\end{eqnarray*}
Moreover, applying the Cauchy-Schwarz and Young's inequalities lead to,
\begin{flalign*}
&\mathbb{E}I_{3}(t\wedge \widetilde{\tau}_{N}) \leq 2p \mathbb{E}\int_{0}^{t\wedge \widetilde{\tau}_{N}} \mid \widetilde{Z}^{\varepsilon}(s)\mid^{2(p-1)} \|\widetilde{\sigma}(s, \widetilde{Z}^{\varepsilon}(s))\|_{L_{Q}} \hspace{.1cm} \mid h(s)\mid_{0} \hspace{.1cm}\mid \widetilde{Z}^{\varepsilon}(s)\mid ds&\\
&\leq \frac{1}{2} \mathbb{E} \int_{0}^{t\wedge \widetilde{\tau}_{N}} \mid \widetilde{Z}^{\varepsilon}(s)\mid^{2(p-1)} \|\widetilde{\sigma}(s, \widetilde{Z}^{\varepsilon}(s))\|_{L_{Q}}^{2} ds &\\
&+ 2p^{2} \mathbb{E} \int_{0}^{t\wedge \widetilde{\tau}_{N}} \mid h(s)\mid_{0}^{2} \hspace{.1cm} \mid \widetilde{Z}^{\varepsilon}(s)\mid^{2p}ds&\\
&\leq K_{9}\widetilde{M}_{p-1}(T, \varepsilon) \left(\frac{T}{2} + K_{6}\right) + 2\varepsilon K_{9} \log \log \frac{1}{\varepsilon} \mathbb{E} \int_{0}^{t\wedge \widetilde{\tau}_{N}} \mid\widetilde{Z}^{\varepsilon}(s)\mid^{2(p-1)}\hspace{.1cm} \|\widetilde{Z}^{\varepsilon}(s)\|^{2}ds&\\
& + 2p^{2}\mathbb{E} \int_{0}^{t\wedge \widetilde{\tau}_{N}} \sup_{0\leq \ell \leq s} \mid \widetilde{Z}^{\varepsilon}(\ell)\mid^{2p}\hspace{.1cm} \mid h(s)\mid_{0}^{2}ds.&
\end{flalign*}
The same reasoning implies,
\begin{eqnarray*}
\mathbb{E}I_{4}(t\wedge \widetilde{\tau}_{N}) &\leq& \frac{pK_{9}}{\log \log \frac{1}{\varepsilon}} \widetilde{M}_{p-1}(T,\varepsilon) \left(\frac{T}{2} + K_{6}\right)\\
&& + 2p\varepsilon K_{9} \int_{0}^{t\wedge \widetilde{\tau}_{N}}\mathbb{E} \mid \widetilde{Z}^{\varepsilon}(s)\mid^{2(p-1)} \hspace{.1cm} \|\widetilde{Z}^{\varepsilon}(s)\|^{2}ds,
\end{eqnarray*}
and
\begin{eqnarray*}
&&\mathbb{E}I_{5}(t\wedge \widetilde{\tau}_{N}) \leq \frac{2p(p-1)}{\log \log \frac{1}{\varepsilon}} \mathbb{E} \int_{0}^{t\wedge \widetilde{\tau}_{N}} \|\widetilde{\sigma}(s, \widetilde{Z}^{\varepsilon}(s))\|_{L_{Q}}^{2} \mid \widetilde{Z}^{\varepsilon}(s)\mid^{2(p-1)}ds\\
&\leq& \frac{2p(p-1)}{\log \log \frac{1}{\varepsilon}} K_{9} \widetilde{M}_{p-1}(T, \varepsilon) (T+ 2K_{6}) + 8\varepsilon p(p-1) K_{9} \widetilde{M}_{p-1}(T, \varepsilon) \widetilde{M}_{1}(T, \varepsilon).
\end{eqnarray*}
Thus, we have,
\begin{eqnarray*}
&& \frac{1}{2} \mathbb{E} \sup_{0\leq s\leq t\wedge \widetilde{\tau}_{N}} \mid \widetilde{Z}^{\varepsilon}(s)\mid ^{2p} + C(p) \mathbb{E}\int_{0}^{t\wedge \widetilde{\tau}_{N}} \mid \widetilde{Z}^{\varepsilon}(s)\mid^{2(p-1)} \|\widetilde{Z}^{\varepsilon}(s)\|^{2}ds\\
&\leq& M_{p}(T,\varepsilon) + \int_{0}^{t\wedge \widetilde{\tau}_{N}} \mathbb{E}\sup_{0\leq \ell \leq s} \mid \widetilde{Z}^{\varepsilon}(\ell)\mid ^{2p} \left(2pc \|u^{0}(s)\|_{L^{4}}^{4} + 2p^{2} \mid h(s)\mid_{0}^{2} \right)ds,
\end{eqnarray*}
with $C(p)= \left(p-36p^{2}\varepsilon K_{9} -2\varepsilon K_{9} \log \log \frac{1}{\varepsilon}-2p\varepsilon K_{9}\right)$ and
\begin{eqnarray*}
M_{p}(T,\varepsilon) &:=& \frac{p}{\log \log \frac{1}{\varepsilon}} K_{9} \widetilde{M}_{p-1}(T, \varepsilon)\left(\left(11p-\frac{3}{2}\right)T + (22p-3)K_{6}\right)\\
&& + K_{9}\widetilde{M}_{p-1}(T,\varepsilon) \left(\frac{T}{2} + K_{6}\right) + 8\varepsilon p(p-1)K_{9}\widetilde{M}_{p-1}(T,\varepsilon) \widetilde{M}_{1}(T,\varepsilon).
\end{eqnarray*}
Hence by Gronwall's inequality we obtain with bounds, $\varepsilon <1/(K_{9}(36p+2))$ and $\varepsilon<1$,
\begin{eqnarray*}
&& \mathbb{E} \sup_{0\leq s\leq t\wedge \widetilde{\tau}_{N}} \mid \widetilde{Z}^{\varepsilon}(t\wedge \widetilde{\tau}_{N})\mid^{2p} + \mathbb{E} \int_{0}^{t\wedge \widetilde{\tau}_{N}}\mid\widetilde{Z}^{\varepsilon}(s)\mid^{2(p-1)} \|\widetilde{Z}^{\varepsilon}(s)\|^{2}ds\\
&\leq& M_{p}(T, \varepsilon) \exp\left(2pc \|u^{0}(s)\|^{4}_{L^{4}} + 2p^{2} \mid h(s)\mid_{0}^{2}\right) =: \widetilde{M}_{p}(T, \varepsilon).
\end{eqnarray*}
Now letting $N$ to go to infinity, we obtain the result.

\end{document}